\tikzstyle{vertex}=[ draw, inner sep=0pt, minimum size=0pt]
\newlist{subquestion}{enumerate}{1}
\date{}
\newtheorem{theorem}{{\bf Theorem}}[section]
\newtheorem{proposition}[theorem]{{\bf Proposition}}
\numberwithin{Subcase}{Case}
\numberwithin{Subsubcase}{Subcase}
\begin{document}

\title{2-uniform covers of $2$-semiequivelar toroidal maps}

 	\author {Dipendu Maity}
 	\affil{Department of Sciences and Mathematics,
 		Indian Institute of Information Technology Guwahati, Bongora, Assam-781\,015, India.~~
 		Email id : dipendu@iiitg.ac.in}

\date{\today}

\maketitle

\begin{abstract}
If every vertex in a map has one out of two face-cycle types, then the map is said to be $2$-semiequivelar. A 2-uniform tiling is an edge-to-edge tiling of regular polygons having $2$ distinct transitivity classes of vertices. Clearly, a $2$-uniform map is $2$-semiequivelar. The converse of this is not true in general. There are 20 distinct 2-uniform tilings (these are of $14$ different types) on the plane. In this article, we prove that a $2$-semiequivelar toroidal map $K$ has a finite $2$-uniform cover if the universal cover of $K$ is $2$-uniform except of two types.
\end{abstract}

\noindent {\small {\em MSC 2010\,:} 52C20, 52B70, 51M20, 57M60.

\smallskip

\noindent {\em Keywords:} Polyhedral map on torus; 2-uniform maps; 2-semiequivelar maps; Symmetric group.}

\section{Introduction}

A map is a connected $2$-dimensional cell complex on a surface. Equivalently, it is a cellular embedding of a connected graph on a surface. In this article, a map will mean a polyhedral map on a surface, that is, non-empty intersection of any two faces is either a vertex or an edge. 

For a map $\mathcal{K}$, let $V(\mathcal{K})$ be the vertex set of $\mathcal{K}$ and $u\in V(\mathcal{K})$. The faces containing  $u $ form a cycle (called the {\em face-cycle} at  $u $)  $C_u $ in the dual graph  of  $\mathcal{K} $. That is,  $C_u $ is of the form  $(F_{1,1}\mbox{-}\cdots\mbox{-}F_{1,n_1})\mbox{-}\cdots\mbox{-}(F_{k,1}\mbox{-}\cdots \mbox{-}F_{k,n_k})\mbox{-}$ $F_{1,1} $, where  $F_{i,\ell} $ is a  $p_i $-gon for  $1\leq \ell \leq n_i $,  $1\leq i \leq k $,  $p_r\neq p_{r+1} $ for  $1\leq r\leq k-1 $ and  {$p_k\neq p_1 $}. In this case, the vertex $u$ is said to be of type $ [p_1^{n_1}, \dots, p_k^{n_k}]$ (addition in the suffix is modulo  $k $). A map  $\mathcal{K} $ is said to be {\em 2-semiequivelar} of type $[p_1^{n_1}, \dots, p_k^{n_k};q_1^{m_1}, \dots, q_k^{m_s}]$ if $V(\mathcal{K}) = V_1 \sqcup V_2$ such that the vertices of $V_1$ is of type $[p_1^{n_1}, \dots, p_k^{n_k}]$ and  the vertices of $V_2$ is of type $[q_1^{m_1}, \dots, q_k^{m_s}]$. So, clearly, if $[p_1^{n_1}, \dots, p_k^{n_k}] = [q_1^{m_1}, \dots, q_k^{m_s}]$  then the map is called  $semiequivelar$ of type $[p_1^{n_1}, \dots, p_k^{n_k}]$. A semiequivelar map is said to be an $equivelar~map$ if it consists of same type of faces.   

A {\em $2$-uniform tiling} is an edge-to-edge tiling of regular polygons having $2$ distinct transitivity classes of vertices. A vertex-transitive map is a map on a closed surface on which the automorphism group acts transitively on the set of vertices. A $2$-uniform tiling or map will have vertices that we could label $X$, and others that we could label $Y$. Each $X$ vertex can be mapped onto every other $X$ vertex, but cannot be mapped to any $Y$ vertex. 
Clearly, an $2$-uniform map is $2$-semiequivelar. 


A {\em semiregular} tiling of $\mathbb{R}^2$ is also known as {\em Archimedean}, or {\em homogeneous}, or {\em uniform} tiling. In \cite{GS1977}, Gr\"{u}nbaum and Shephard showed that there are exactly eleven types of Archimedean tilings on the plane. These types are $[3^6]$, $[3^4,6^1]$, $[3^3,4^2]$,  $[3^2,4^1,3^1,4^1]$, $[3^1,6^1,3^1,6^1]$, $[3^1,4^1,6^1,4^1]$, $[3^1,12^2]$, $[4^4]$, $[4^1,6^1,12^1]$, $[4^1,8^2]$, $[6^3]$.
Clearly, a {\em semiregular} tiling on $\mathbb{R}^2$ gives a semiequivelar map on $\mathbb{R}^2$. But, there are semiequivelar maps on the plane which are not (not isomorphic to) an Archimedean tiling. In fact, there exists $[p^q]$ equivelar maps on $\mathbb{R}^2$ whenever $1/p+1/q<1/2$ (e.g., \cite{CM1957}, \cite{FT1965}). Thus, we have

\begin{proposition} \label{prop:plane}
There are infinitely many types of equivelar maps on the plane $\mathbb{R}^2$.
\end{proposition}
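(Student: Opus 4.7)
The plan is to reduce the statement to the existence theorem for $[p^q]$ tilings already invoked in the paragraph preceding the proposition. First I would observe that the set of ordered pairs of integers $(p,q)$ with $p, q \geq 3$ satisfying $1/p + 1/q < 1/2$ is infinite; an explicit infinite subfamily is $(3, q)$ for $q \geq 7$, since $1/3 + 1/q \leq 1/3 + 1/7 = 10/21 < 1/2$. (Equivalently one could use $(p,3)$ for $p \geq 7$, or $(p,p)$ for $p \geq 5$.)

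Next I would invoke the classical construction from \cite{CM1957} and \cite{FT1965}: for every such pair $(p,q)$ there exists a tiling of the hyperbolic plane $\mathbb{H}^2$ by regular $p$-gons meeting $q$ at every vertex. Because $\mathbb{H}^2$ is homeomorphic to $\mathbb{R}^2$, the combinatorial skeleton of this tiling gives a polyhedral map on the topological plane $\mathbb{R}^2$ in which every vertex has face-cycle type $[p^q]$, i.e.\ an equivelar map of type $[p^q]$.

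Finally I would argue that the vertex type is a combinatorial invariant of the map: any cellular isomorphism preserves the valence of each vertex and the number of sides of each face, so maps of types $[p^q]$ and $[p'^{q'}]$ with $(p,q) \neq (p',q')$ cannot be isomorphic, and in particular belong to different types in the sense of the paper. Letting $q$ range over $\{7, 8, 9, \dots\}$ therefore produces infinitely many pairwise non-isomorphic equivelar maps of distinct types $[3^q]$ on $\mathbb{R}^2$, which proves the proposition.

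There is no real obstacle: all the geometry is outsourced to \cite{CM1957,FT1965}, and the only remaining content is the trivial observation that the inequality $1/p + 1/q < 1/2$ has infinitely many integer solutions together with the tautology that the symbol $[p^q]$ is an isomorphism invariant. If I wanted to avoid appealing to hyperbolic geometry one could instead exhibit the universal cover of any equivelar polyhedral surface of genus $\geq 2$ of type $[p^q]$ (which exists for the same range of $(p,q)$), but the direct hyperbolic construction is cleaner.
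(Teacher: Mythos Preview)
Your proposal is correct and follows exactly the line the paper indicates: the paper does not give a separate proof but simply precedes the proposition with the sentence ``there exists $[p^q]$ equivelar maps on $\mathbb{R}^2$ whenever $1/p+1/q<1/2$ (e.g., \cite{CM1957}, \cite{FT1965}). Thus, we have'' and states the proposition as an immediate consequence. Your write-up just makes explicit the two trivial observations the paper leaves implicit, namely that this inequality has infinitely many integer solutions and that the symbol $[p^q]$ distinguishes types.
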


We know that the plane is the universal cover of the torus. Since there are infinitely many equivelar maps on the plane, it is natural to ask that what are the other types of semiequivelar maps exist on the torus.  Here we have the following result. 

\begin{proposition} \cite{DM2017, DM2018} \label{theo:GrSh}
Let $X$ be a semiequivelar map on a surface $M$. If $M$ is the torus then the type of $X$ is $[3^6]$, $[6^3]$, $[4^4]$, $[3^4,6^1]$, $[3^3,4^2]$, $[3^2,4^1,3^1,4^1]$,  $[3^1,6^1,3^1,6^1]$,
$[3^1,4^1,6^1,4^1]$,  $[3^1,12^2]$, $[4^1,8^2]$  or $[4^1,6^1,12^1]$.
\end{proposition}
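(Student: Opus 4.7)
The plan is to extract a single Diophantine identity from Euler's formula on the torus, enumerate its solutions, and then eliminate the cyclic orderings that fail to be combinatorially consistent.

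Suppose $X$ has type $[p_1^{n_1},\dots,p_k^{n_k}]$ with $V$, $E$, and $F$ denoting the numbers of vertices, edges, and faces. First I would extract two counting identities: $2E = dV$ where $d := n_1+\cdots+n_k$, and for each integer $p\ge 3$, writing $F_p$ for the number of $p$-gonal faces and $N_p := \sum_{i:\,p_i=p} n_i$, that $pF_p = N_p V$, whence $F = V\sum_i n_i/p_i$. Substituting into $V - E + F = 0$ produces the ``flat vertex'' relation
\[
\sum_{i=1}^{k} n_i\,\frac{p_i-2}{p_i} \;=\; 2.
\]
Since each summand is at least $1/3$, this forces $d \le 6$, bounding both $k$ and the $p_i$. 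A short case split on $d\in\{3,4,5,6\}$ and on the multiset $\{p_i\}$ would leave precisely eleven admissible multisets, the familiar list of Archimedean vertex figures.

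Next I would lift each admissible multiset to a cyclic ordering. For most multisets the ordering is forced by the condition $p_r\neq p_{r+1}$, but a few, such as $\{3,3,6,6\}$ and $\{3,4,4,6\}$, a priori admit two inequivalent cyclic sequences. The unwanted one would be eliminated by a local consistency check: fix a single edge $e$, read off from each endpoint's face-cycle the pair of faces incident to $e$, and require that the two pairs agree. In each spurious case, propagating the face-cycle around a small closed path (for instance around a triangle adjacent to $e$) forces an equality such as ``hexagon $=$ triangle'', which contradicts the distinct edge-size data and rules out that ordering. The surviving orderings give exactly the eleven types listed. Realizability of each type is then immediate, since every Archimedean tiling of $\mathbb{R}^2$ is invariant under a rank-$2$ translation lattice $\Lambda$, and the quotient $\mathbb{R}^2/\Lambda$ is a torus carrying a semiequivelar map of the prescribed type.

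The main obstacle is this last local-to-global step. The Diophantine enumeration is essentially routine once the flat vertex identity is in hand, but ruling out cyclic arrangements like $[3^2,6^2]$ requires consistently tracing face-cycles around small closed paths and verifying the derived contradictions multiset by multiset; without this combinatorial cleanup the classification would overcount and the statement of the proposition would fail.
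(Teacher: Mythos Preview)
The paper does not supply its own proof of this proposition; it is quoted as a known result from \cite{DM2017, DM2018}. So there is no in-paper argument to compare against, and I can only evaluate your plan on its own terms.

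Your overall strategy---Euler's formula, the flat-vertex identity, enumeration, then local consistency---is the standard one and will ultimately succeed, but the plan as written has a genuine gap in the enumeration step. The identity $\sum_i n_i(p_i-2)/p_i = 2$ does \emph{not} cut the list down to eleven multisets. For $d=3$ alone it already admits $\{3,7,42\}$, $\{3,8,24\}$, $\{3,9,18\}$, $\{3,10,15\}$, $\{4,5,20\}$, $\{5,5,10\}$ in addition to the four you want, and for $d=4$ it admits $\{3,3,4,12\}$ as well. Your consistency step is therefore doing strictly more work than you describe: it must kill several multisets outright, not merely select a cyclic ordering within a multiset. The mechanism is the one you hint at---walk around an odd face (usually a triangle) and observe that the adjacent faces must alternate between two distinct sizes, which is impossible around an odd cycle---but you need to run it on each of the extra multisets above, and your plan as stated skips this.

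A second, smaller bookkeeping point: the multiset $\{3,3,3,4,4\}$ yields \emph{two} legitimate types, $[3^3,4^2]$ and $[3^2,4,3,4]$, so the eleven Archimedean types arise from only ten distinct multisets. This does not break the argument, but it means the passage from multisets to cyclic orderings is not the one-to-one ``lift'' your wording suggests.
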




We know that all the Archimedean tiling are vertex-transitive. But, it not true on the torus. Here, we know the following.   

\begin{proposition} \cite{DM2017, DM2018} \label{prop:36&44}
Let $X$ be an equivelar map on the torus. If the type of $X$ is $[3^6]$, $[4^4]$, $[6^3]$ or $[3^3,4^2]$ then $X$ is vertex-transitive. If the type is $[3^2,4^1,3^1,4^1]$,  $[3^1,6^1,3^1,6^1]$, $[3^1,4^1,6^1,4^1]$,  $[3^1,12^2]$, $[4^1,8^2]$, $[3^4,6^1]$  or $[4^1,6^1,12^1]$ then there exists a semiequivelar toroidal map of which is not vertex-transitive.
\end{proposition}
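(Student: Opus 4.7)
The statement has two parts; I treat each separately.

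For the first part, the plan is to pass to the universal cover. Let $\tilde X$ be the Archimedean tiling of $\mathbb{R}^2$ matching the type of $X$, and let $\Gamma$ be the deck transformation group, a rank-$2$ translation lattice, so that $X \cong \tilde X/\Gamma$ and $\mathrm{Aut}(X) \supseteq N_{\mathrm{Aut}(\tilde X)}(\Gamma)/\Gamma$. For types $[3^6]$ and $[4^4]$, the translation subgroup $T$ of $\mathrm{Aut}(\tilde X)$ acts transitively on vertices; since $T$ is abelian and $\Gamma \leq T$, one has $T \leq N(\Gamma)$, so $T/\Gamma$ is transitive on $V(X)$. For types $[6^3]$ and $[3^3,4^2]$, a direct check shows that $T$ has exactly two orbits on $V(\tilde X)$, and I exhibit an explicit half-turn $\rho \in \mathrm{Aut}(\tilde X)$ that swaps the two orbits and normalizes every translation lattice: for $[6^3]$ take $\rho$ to be the $\pi$-rotation about the midpoint of any edge, and for $[3^3,4^2]$ the $\pi$-rotation about the center of any square. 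Since $\rho t \rho^{-1} = t^{-1}$ for every $t \in T$, one has $\rho \in N(\Gamma)$ automatically, and then $\langle T, \rho\rangle/\Gamma$ acts transitively on $V(X)$.

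For the second part, for each of the seven listed types I construct one explicit semiequivelar toroidal map that fails to be vertex-transitive. In each case the universal cover $\tilde X$ has the property that $T$ has at least two orbits on $V(\tilde X)$, and vertex-transitivity of $\tilde X$ in the plane is witnessed only by extra symmetries (rotations, reflections, or glide reflections) lying outside $T$. The plan is to choose a rank-$2$ sublattice $\Gamma \leq T$ generically, so that none of the finitely many extra coset representatives $s \in \mathrm{Aut}(\tilde X)/T$ satisfies $s \Gamma s^{-1} = \Gamma$. Concretely, if $v_1, v_2$ is a basis of the minimal translation lattice, I take $\Gamma = \langle a v_1,\, a v_1 + b v_2\rangle$ for suitably large coprime integers $a, b$; the finitely many conditions imposed by the extra symmetries can all be violated simultaneously, so that $N(\Gamma) \leq T$ and $X = \tilde X/\Gamma$ has at least two vertex orbits.

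The main obstacle lies in the second part: for each of the seven tilings one must enumerate the coset representatives of $T$ in $\mathrm{Aut}(\tilde X)$, identify which of them potentially swap vertex orbits, and verify that the chosen $\Gamma$ is not preserved by any of them. Additionally, $\Gamma$ must be large enough that $X$ is polyhedral (no two faces meet in more than an edge or a vertex), which imposes only a standard lower bound on $a$ and $b$.
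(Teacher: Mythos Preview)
The paper does not prove this proposition; it is quoted from \cite{DM2017, DM2018} as background, so there is no in-paper argument to compare against. I assess your plan on its own merits.

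Your treatment of the first part is sound. The key observation---that a half-turn $\rho$ satisfies $\rho t \rho^{-1} = t^{-1}$ for every translation $t$, hence normalizes \emph{every} sublattice $\Gamma \le T$---is exactly what makes types $[6^3]$ and $[3^3,4^2]$ vertex-transitive for any choice of $\Gamma$.

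But this same observation breaks your plan for the second part. All seven tilings listed there have a centre of $2$-fold rotation (their wallpaper groups are $p4g$, $p6m$, $p6m$, $p6m$, $p4m$, $p6$, $p6m$), so $-\mathrm{id}$ lies in the point group $P$. Since $-\mathrm{id}$ preserves every lattice, you can \emph{never} arrange $N_{\mathrm{Aut}(\tilde X)}(\Gamma) \le T$: the half-turns are always in the normalizer. Worse, your concrete choice $\Gamma = \langle a v_1,\, a v_1 + b v_2\rangle = \langle a v_1,\, b v_2\rangle$ is, in the square-lattice cases, also invariant under the coordinate-axis reflections; for $[4^1,8^2]$ one checks that $\langle T, -\mathrm{id}, \text{axis reflection}\rangle$ already acts transitively on vertices, so this $\Gamma$ yields a vertex-transitive torus, not the desired counterexample.

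The repair is to aim for less: you do not need $N(\Gamma) = T$, only that $N(\Gamma)/\Gamma$ fails to be transitive on $V(X)$. Equivalently, writing $P_\Gamma \le P$ for the subgroup of point-group elements with $p(\Gamma)=\Gamma$, you need $P_\Gamma$ to act non-transitively on the finite set of $T$-orbits of vertices. For each of the seven tilings one must then (i) verify that $\{\pm\mathrm{id}\}$ alone does not merge all $T$-orbits (true in every case, but it does merge some), and (ii) exhibit a specific $\Gamma$ with $P_\Gamma = \{\pm\mathrm{id}\}$, in particular one not invariant under any reflection or order-$3$ rotation in $P$. This is the case-by-case work carried out in the cited references; your generic lattice does not accomplish it as written.
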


We know that the $2$-sphere $\mathbb{S}^2$ is simply connected. So,  the boundary of the pseudo-rhombicuboctahedron (which is a semiregular spherical map of type $[4^3, 3]$) has no other cover. We also know that this map is not vertex-transitive. Thus, this map has no vertex-transitive cover. We know that the Archimedean tilings are vertex-transitive. So, each semiequivelar toroidal map has vertex-transitive universal cover. Here, we know 

\begin{proposition} \cite{BD2020} If $X$ is a semiequivelar toroidal map then there exists a covering $\gamma \colon Y \to X
$ where $Y$ is a vertex-transitive toroidal map.
\end{proposition}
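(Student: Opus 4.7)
The plan is to lift the toroidal map to its Archimedean universal cover, pass to a finite-index normal sublattice of the deck group, and then let the ambient point group of the tiling descend to act transitively on the vertices of the resulting finite quotient. By Proposition~\ref{theo:GrSh}, the semiequivelar toroidal map $X$ is of one of eleven types, each of which is realized by an Archimedean tiling of $\mathbb{R}^2$. Accordingly, I realize the universal cover $\tilde X \to X$ as an Archimedean tiling and view its combinatorial automorphism group $G$ as a cocompact discrete subgroup of the Euclidean isometry group of $\mathbb{R}^2$. Since Archimedean tilings are vertex-transitive, $G$ acts transitively on $V(\tilde X)$. Write $T\le G$ for the translation subgroup: $T$ is a rank-$2$ lattice, and the point group $G/T$ is a finite subgroup of $O(2)$.

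Let $\Lambda$ denote the deck-transformation group of $\tilde X\to X$. Then $\Lambda$ is free abelian of rank $2$, acts freely on $\mathbb{R}^2$, and, because $X$ is the \emph{orientable} torus, consists of orientation-preserving isometries; a fixed-point-free, orientation-preserving Euclidean isometry must be a translation, so $\Lambda\le T$. Moreover $\Lambda$ has finite index in $T$, since $T/\Lambda$ embeds into the (finite) group of translational automorphisms of the compact map $X$. Now set
\[
\Lambda' \;:=\; \bigcap_{g\in G} g\Lambda g^{-1}.
\]
As $T$ is abelian and contains $\Lambda$, the conjugate $g\Lambda g^{-1}$ depends only on the coset $gT$, so this is in fact an intersection over the finite point group $G/T$; it is therefore a finite-index sublattice of $T$ contained in $\Lambda$, and it is normal in $G$ by construction.

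Define $Y := \tilde X/\Lambda'$. Since $\Lambda'$ is a rank-$2$ lattice of translations, $Y$ is a toroidal map, and the factorization $\tilde X\to Y\to X$ provides a finite covering $\gamma\colon Y\to X$. Normality of $\Lambda'$ in $G$ makes the $G$-action on $\tilde X$ descend to a $G/\Lambda'$-action on $Y$ by map automorphisms, and since $G$ is already transitive on $V(\tilde X)$, the quotient is transitive on $V(Y)$; hence $Y$ is vertex-transitive. The only delicate point is the identification in the second paragraph of $\Lambda$ as a sublattice of pure translations: this relies on both the Euclidean realization of the Archimedean universal cover (so that automorphisms are isometries) and on the orientability of the torus (to rule out glide reflections). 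Once that is in place, the rest of the argument is a standard averaging of $\Lambda$ over the finite point group $G/T$.
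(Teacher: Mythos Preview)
The paper does not prove this proposition; it is merely cited from \cite{BD2020}. So there is no proof in the present paper to compare against directly. Your argument is correct, and in fact it is the same scheme the paper uses for the analogous statement it \emph{does} prove, Theorem~\ref{theo1}(a): realise the universal cover as a Euclidean tiling, observe that the deck group $\Lambda$ lies in the translation lattice $T$ (orientability rules out glide reflections), and then replace $\Lambda$ by a finite-index sublattice that is normal in the full symmetry group $G$, so that $G$ descends to the quotient and supplies vertex-transitivity.

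The one genuine difference in execution is how the normal sublattice is produced. The paper, in each case of Theorem~\ref{theo1}(a), writes the generators $A,B$ of $T$ as rational combinations of the generators $C,D$ of $\Lambda$, clears denominators to get an integer $m$, sets $L=\langle \alpha^m,\beta^m\rangle\le\Lambda$, and then verifies $L\unlhd G$ by an explicit computation with the rotation $\rho$ (Claims~1 and~2). You instead take $\Lambda'=\bigcap_{g\in G} g\Lambda g^{-1}$ and note that the intersection is really over the finite point group $G/T$, so $\Lambda'$ is automatically a finite-index normal sublattice. Your route is cleaner and avoids any case analysis or coordinate computation; the paper's route has the mild advantage of giving an explicit $m$, hence an explicit bound on the degree of the cover.

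One small point worth tightening: Proposition~\ref{theo:GrSh} as stated only tells you the \emph{type} of $X$, not that its universal cover is the Archimedean tiling of that type. You are implicitly using the uniqueness result (also in \cite{DM2018}) that any semiequivelar planar tiling of one of these eleven types is isomorphic to the Archimedean one; without that, ``realize the universal cover $\tilde X\to X$ as an Archimedean tiling'' would be a gap. It is harmless here since the needed fact is in the same reference, but you should cite it explicitly.
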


The {\em $2$-uniform} tilings of the plane $\mathbb{R}^2$ are the generalization of vertex-transitive tilings on the plane. We know from \cite{GS1977, GS1981, Otto1977} that there are 20 2-uniform tiling of types
\begin{align*}
   & [3^6;3^3;4^{2}], [3^{6};3^2,4^1,3^1,4^1],  [3^4,6^1;3^2,6^{2}], [3^{3}, 4^2;3^1,4^1,6^1,4^1], [3^3, 4^2;3^2,4^1,3^1,4^1],\\
   &  [3^{6}; 3^2, 4^1,12^1], [3^1, 4^1, 6^1, 4^1; 4^1, 6^1, 12^1], [3^2,4^1,3^1,4^1;3^1, 4^1,6^1,4^1], [3^2,6^2; 3^1, 6^1, 3^1, 6^1],\\
   & [3^1, 4^1, 3^1, 12^1; 3^1, 12^2], [3^1,4^2,6^1; 3^1, 4^1, 6^1, 4^1], [3^1,4^2, 6^1; 3^1, 6^1, 3^1, 6^1], [3^3,4^2;4^4], [3^{6};3^4,6^1].
\end{align*}
 on the plane (see in Section \ref{2uniform}). 
Since the plane is the universal cover of the torus, so, these types of maps also exist on the torus. Here we know the following.

\begin{proposition} \cite{MDD2020}
Let $X$ be a $2$-semiequivelar map on the torus that is the quotient of the plane's $2$-uniform lattice. Let the vertices of $X$ form $m$ Aut$(X)$-orbits. Then, $m \leq k$ for some positive integer $k$.
\end{proposition}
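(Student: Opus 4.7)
The plan is to exploit the covering structure $X \cong \widetilde{X}/\Lambda$, where $\widetilde{X}$ is a 2-uniform tiling of the plane (one of the 14 types enumerated above) and $\Lambda$ is a rank-2 lattice of translations acting freely and properly discontinuously on $\widetilde{X}$. Write $G = \text{Aut}(\widetilde{X})$, which is a wallpaper group with full translation subgroup $T$ and finite point group $P = G/T$. By the 2-uniform hypothesis, $G$ has exactly two vertex orbits $V_1$ and $V_2$ on $V(\widetilde{X})$, corresponding to the two prescribed face-cycle types $[p_1^{n_1},\dots]$ and $[q_1^{m_1},\dots]$.

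First I would decompose each $V_i$ into $T$-orbits. Since $[G:T]=|P|$ is finite and $G$ acts transitively on $V_i$, restricting to $T$ splits $V_i$ into at most $|P|$ orbits, producing at most $2|P|$ many $T$-orbits on $V(\widetilde{X})$ in total; call this number $N$. The key observation is that $N$ depends only on the type of $\widetilde{X}$: for each of the 14 types one can read off $|P|$ and the exact number of $T$-orbits by enumerating the vertices lying inside one fundamental domain for $T$. Setting $k := \max_\tau N(\tau)$ over the 14 types then yields a universal positive integer.

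Next I would show that $T/\Lambda \leq \text{Aut}(X)$. Because $T$ is abelian, $\Lambda \trianglelefteq T$, so the pure translation action of $T$ on $\widetilde{X}$ descends to an action of $T/\Lambda$ on $X$ by cellular automorphisms. Under the identification $V(X)=V(\widetilde{X})/\Lambda$, the orbits of $T/\Lambda$ on $V(X)$ are in bijection with the $T$-orbits on $V(\widetilde{X})$, hence number exactly $N$. Since every $\text{Aut}(X)$-orbit is a union of $T/\Lambda$-orbits, this immediately forces $m \leq N \leq k$.

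The main obstacle is the case-by-case verification that $N$ is finite and correctly computed for each of the 14 types: in each case one must identify the translation sublattice of the wallpaper symmetry group, count the vertices in a fundamental domain, and check that the split $V_1 \sqcup V_2$ is compatible with the resulting $T$-orbit partition. A secondary subtlety is justifying that the deck transformation group of $\widetilde{X}\to X$ consists of \emph{pure} translations rather than glide reflections or rotations; this follows because $X$ is a torus, so the deck group must be a rank-2 free abelian group acting freely on $\mathbb{R}^2$, and the only such subgroups of the wallpaper group $G$ are sublattices of $T$.
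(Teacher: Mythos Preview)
The paper does not actually supply a proof of this proposition; it is quoted as a result from the reference [MDD2020] and used only as background, so there is no in-paper argument to compare against. That said, your strategy is correct and is exactly the mechanism the paper itself exploits in its proof of Theorem~\ref{theo1}(a): for each tiling $K_i$ the full translation subgroup $H_i\le\mathrm{Aut}(K_i)$ is identified, the deck group $\Gamma_i$ is shown to lie inside $H_i$ (orientability of the torus rules out glide reflections, free action rules out rotations), and the finitely many $H_i$-orbits on $V(K_i)$ are enumerated explicitly---for instance twelve orbits for $K_1$. The quotient $H_i/\Gamma_i$ then acts on $X_i$ with the same number of vertex orbits, giving a uniform bound on $m$. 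Your abstract version via the point group $P=G/T$ and the bound $N\le 2|P|$ packages the same idea without the case-by-case enumeration.

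One small imprecision: you quantify over the ``14 types,'' but the universal constant $k$ should be taken as the maximum of $N$ over all \emph{twenty} tilings $K_1,\dots,K_{20}$, since two non-isomorphic 2-uniform tilings sharing the same pair of vertex types can have different wallpaper groups and hence different numbers of translation orbits. This does not affect the validity of your argument, only the stated range of the case check.
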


In this article, we prove the following.

\begin{theorem}\label{theo1}

\begin{enumerate}
    \item[(a)] If $X$ is a $2$-semiequivelar toroidal map that is the quotient of the plane's $2$-uniform lattice of type other than $[3^3, 4^2;3^2,4^1,3^1,4^1]$ and $[3^1,4^2, 6^1; 3^1, 6^1,$ $3^1, 6^1]$ then there exists a covering $\alpha : Y \to X$ where $Y$ is a $2$-uniform toroidal map.
    \item[(b)] If $X$ is a $2$-semiequivelar toroidal map that is the quotient of the plane's $2$-uniform lattice of type $[3^3, 4^2;3^2,4^1,3^1,4^1]$ or $[3^1,4^2, 6^1; 3^1, 6^1, 3^1, 6^1]$ then there does not exist any covering $\gamma : Y \to X$ where $Y$ is a $2$-uniform toroidal map.
    \item[(c)] If $X$ is a $2$-semiequivelar toroidal map that is not the quotient of the plane's $2$-uniform lattice then there does not exist  any $2$-uniform toroidal map $Y$ such that $\beta : Y \to X$ is a covering map.
\end{enumerate}
\end{theorem}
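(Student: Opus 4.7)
The proof separates into parts (a), (b), (c), which I would treat in order of increasing difficulty.

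Part (c) I would dispatch first by a universal-cover argument. Suppose, toward contradiction, that $\beta : Y \to X$ is a covering with $Y$ a $2$-uniform toroidal map. The map-theoretic universal cover $\widetilde{Y}$ of $Y$ on the plane must be one of the $20$ planar $2$-uniform tilings enumerated in Section~1 (since $2$-uniformity of a torus map lifts to $2$-uniformity of its planar universal cover). Composing $\widetilde{Y}\to Y\to X$ exhibits $\widetilde{Y}$ as the universal cover of $X$ as well, so $X$ is the quotient of a planar $2$-uniform lattice, contradicting the hypothesis of (c). Hence no such $Y$ exists.

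For part (a), I would build the cover explicitly from the wallpaper-group structure of the universal cover. Write $X = L/\Lambda$, where $L$ is one of the $12$ admissible planar $2$-uniform tilings and $\Lambda\subset T(L)\subset\mathrm{Aut}(L)$ is the rank-$2$ translation lattice realizing the torus; let $P := \mathrm{Aut}(L)/T(L)$ denote the finite point group. Define
\[
\Lambda_0 \;:=\; \bigcap_{p\in P} p\cdot\Lambda,
\]
where $p$ acts on $T(L)\cong\mathbb{Z}^2$ via its linear part. Then $\Lambda_0$ is a rank-$2$ finite-index sublattice of $\Lambda$, is $P$-invariant, and hence is normal in $\mathrm{Aut}(L)$. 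The finite toroidal quotient $Y := L/\Lambda_0$ covers $X$, and $\mathrm{Aut}(L)/\Lambda_0$ embeds into $\mathrm{Aut}(Y)$. Since $\mathrm{Aut}(L)$ has exactly two vertex-orbits on $L$ (by $2$-uniformity of the planar tiling), the induced action on $V(Y)$ has exactly two orbits, and these correspond to distinct face-cycle types and so cannot be merged by any further automorphism. Thus $\mathrm{Aut}(Y)$ acts with exactly two orbits, so $Y$ is $2$-uniform. A type-by-type check is needed to confirm that the construction indeed yields a polyhedral toroidal map.

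Part (b) is the main obstacle: non-existence of any $2$-uniform cover for the two exceptional types. The plan is to examine each of the two planar $2$-uniform tilings directly and argue as follows: (i) produce the explicit tiling(s) of each exceptional type together with a presentation of $\mathrm{Aut}(L)$ (translations, rotation centers, mirror lines); (ii) identify the specific non-translation symmetries that merge the several $T(L)$-orbits of each vertex-type into a single $\mathrm{Aut}(L)$-orbit, and show that these symmetries lie at rotation/reflection centers whose geometric position is incompatible with every choice of finite-index translation sublattice $\Lambda''\subset T(L)$ that yields a $2$-semiequivelar quotient of the prescribed partition; (iii) conclude that for any finite $\Lambda''\subset\Lambda$, the quotient $L/\Lambda''$ admits at least three $\mathrm{Aut}$-orbits on its vertex set. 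The decisive obstruction in (iii) is that the intersection trick of part~(a), which succeeds in the other twelve types, encounters for these two types an incompatibility between the merging symmetries and the required polyhedral structure on the torus.

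The hardest step is (b)(iii): proving universal non-existence rather than producing a single obstruction requires tight geometric control of the two offending planar tilings and a complete enumeration of candidate translation sublattices $\Lambda''$. This is where the bulk of the case analysis lies, and where the fine distinction between the twelve admissible and two exceptional types is actually exhibited.
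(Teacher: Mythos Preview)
Your treatment of part (c) matches the paper's argument essentially verbatim.

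For part (a) your intersection construction $\Lambda_0=\bigcap_{p\in P}p\cdot\Lambda$ is cleaner than the paper's case-by-case work, but as written it proves too much: nothing in your argument uses the hypothesis that the type is not one of the two exceptional ones, so the same reasoning would produce a $2$-uniform cover for every quotient of every $2$-uniform planar tiling, directly contradicting (b). The paper avoids this by working not with the full $\mathrm{Aut}(L)$ but with the subgroup $G=\langle\text{translations},\rho\rangle$ generated by translations and a single rotation $\rho$; it then checks, tiling by tiling, that this \emph{orientation-preserving} group already has exactly two vertex orbits. That verification is precisely where the twelve admissible types separate from the two exceptional ones. Your use of the full point group $P$ (which contains reflections) erases this distinction, so you must either restrict to the rotational subgroup and perform the orbit check, or explain why the reflection/glide part of $\mathrm{Aut}(L)/\Lambda_0$ fails to act on $Y$ in the exceptional cases.

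For part (b) you have badly overestimated the difficulty. The paper's argument is a single paragraph: in the exceptional tilings $K_9,K_{10},K_{18},K_{19}$ one exhibits two translation-orbits of vertices with the \emph{same} face-cycle type (e.g.\ $O(b_{33})$ and $O(b_{34})$ in $K_9$) and observes that the only elements of $\mathrm{Aut}(K_i)$ carrying one to the other are reflections or glide reflections. Since any toroidal quotient $K_i/\Gamma$ has $\Gamma$ contained in the translation group, the paper concludes these orbits remain distinct in every toroidal quotient, giving at least three $\mathrm{Aut}$-orbits. There is no enumeration of sublattices, and the obstruction has nothing to do with polyhedrality; it is the orientation-reversing nature of the merging symmetries. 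Your plan (b)(ii)--(iii) is aimed at the wrong target.
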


\section{$2$-uniform tilings of the plane}\label{2uniform}

We first present $20$ $2$-uniform tilings on the plane. These are also given in \cite{MDD2020}. We need these for the proofs of our results in Section \ref{sec:proofs-1}.

\begin{figure}[H]
    \centering
    \includegraphics[height=6cm, width= 6cm]{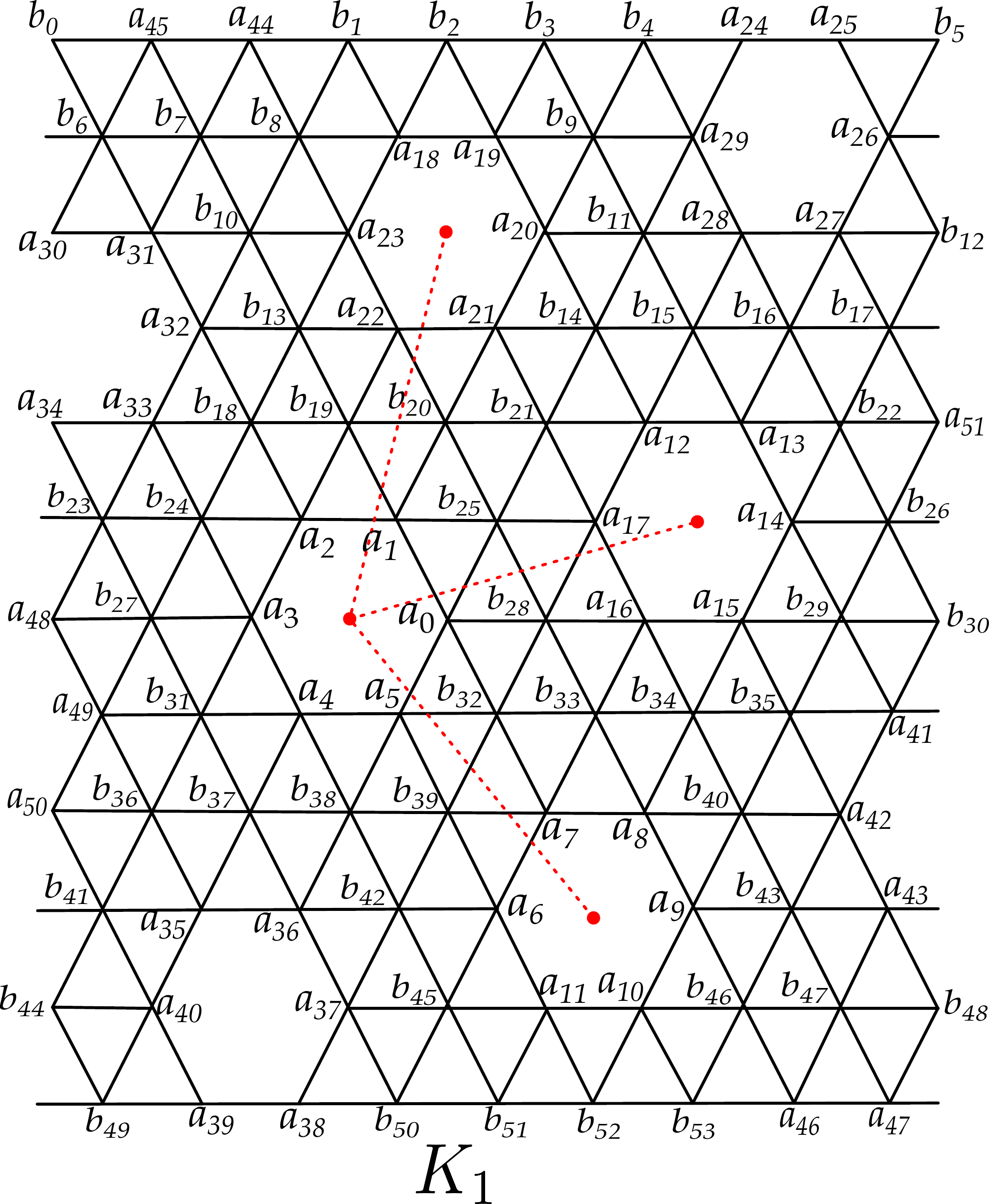}\hspace{5mm}
    \includegraphics[height=6cm, width=6cm]{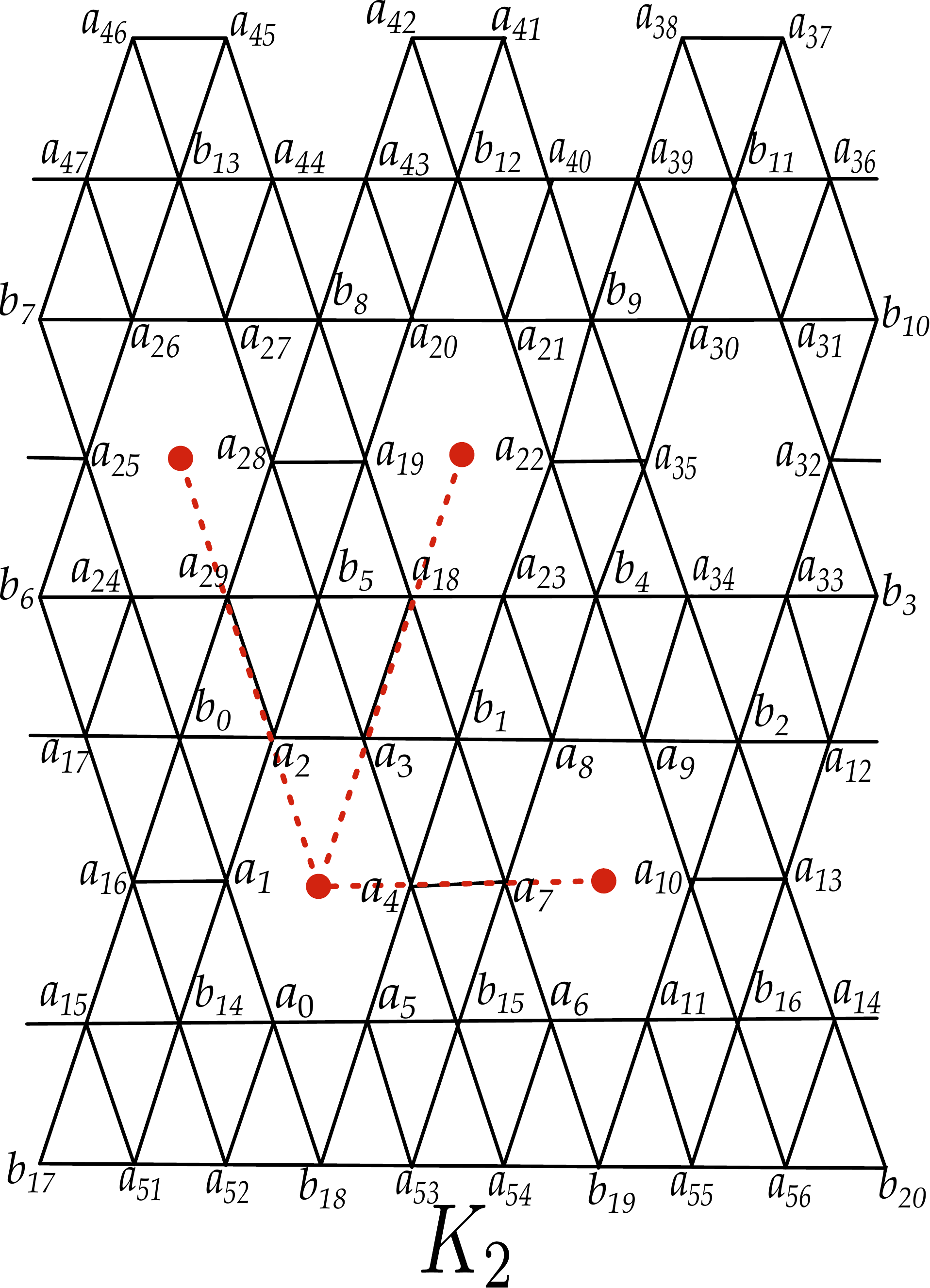}
     \vspace{10mm}
     
    \includegraphics[height=6cm, width= 6cm]{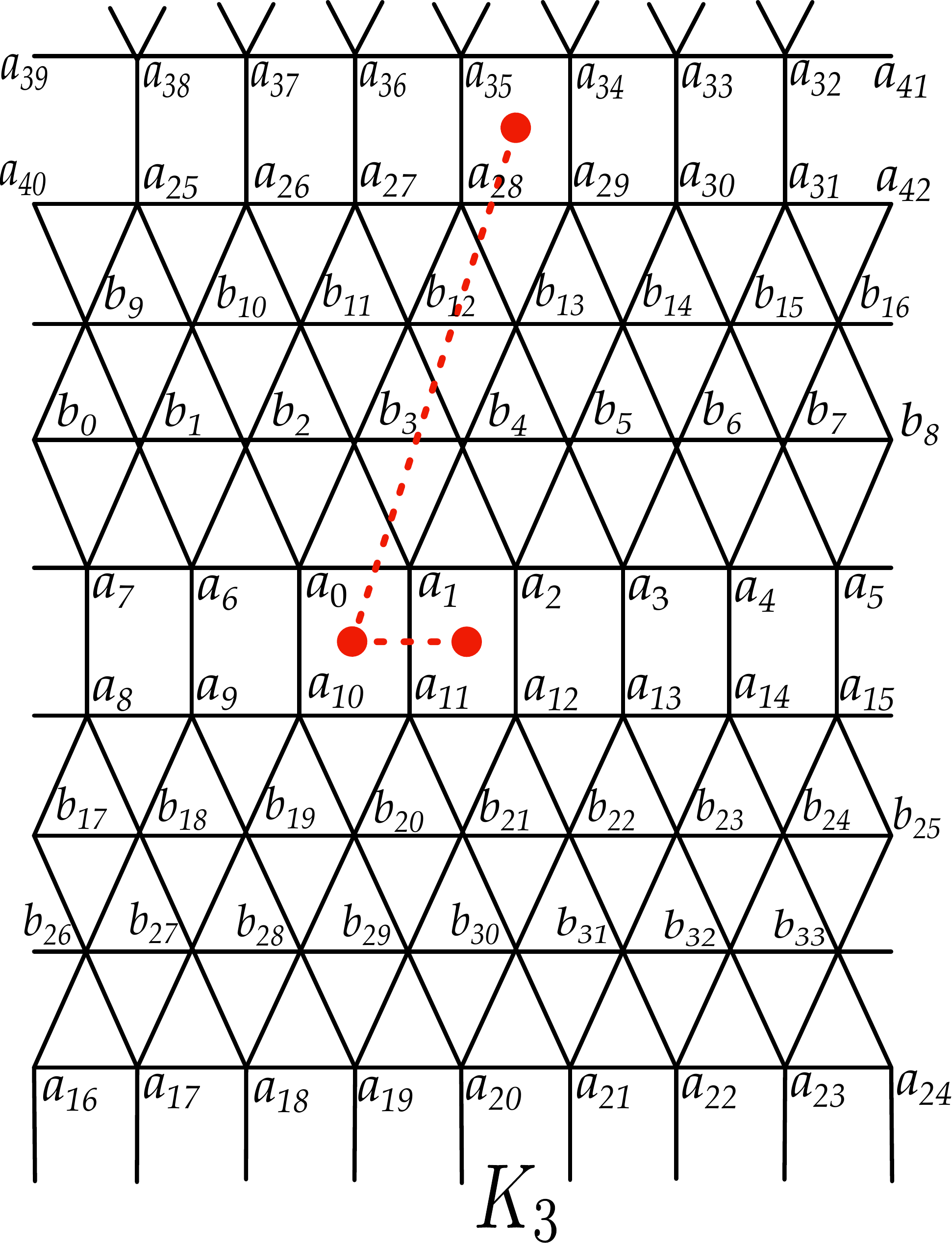}\hspace{5mm}
    \includegraphics[height=6cm, width= 6cm]{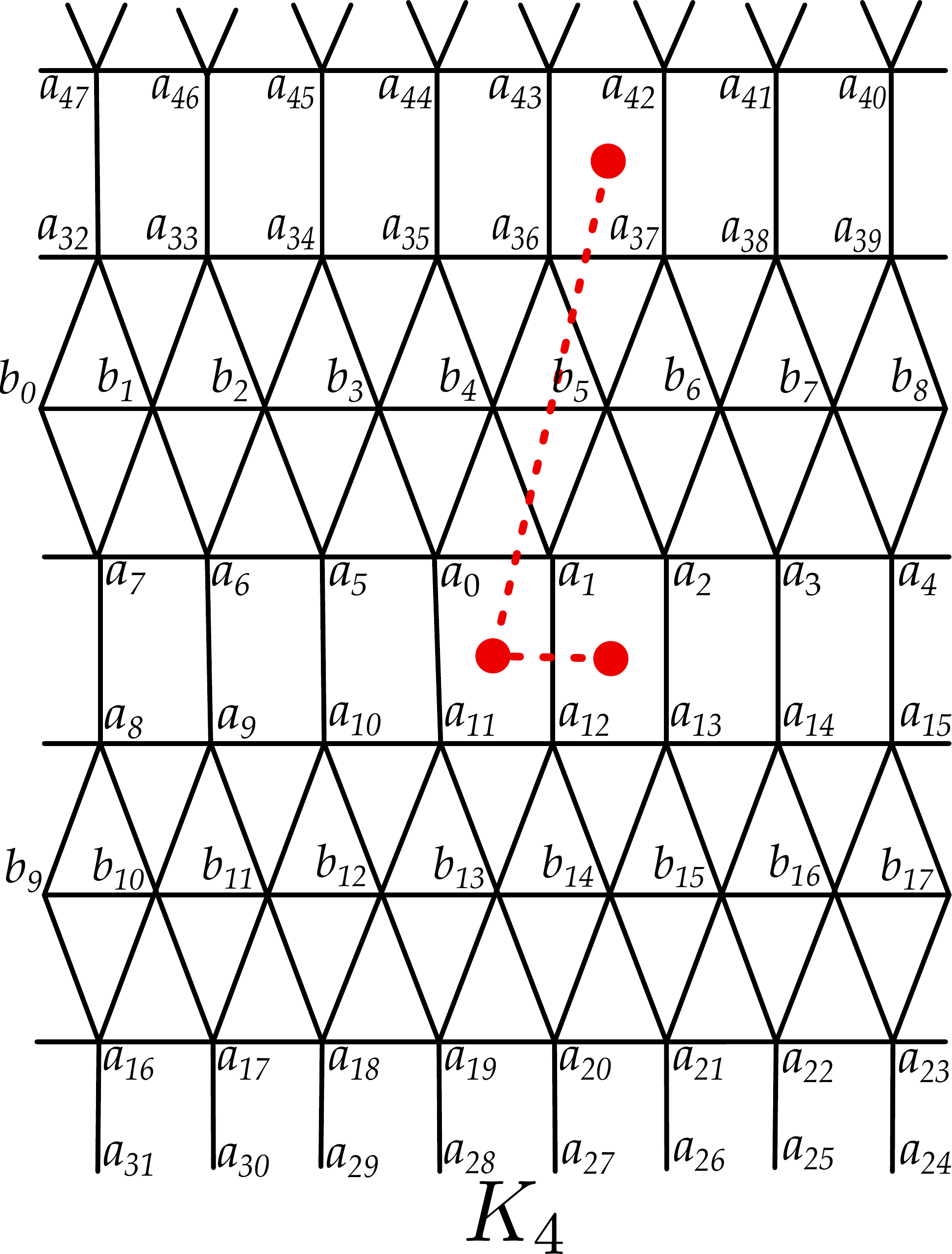}
    \vspace{10mm}
    \end{figure}
    
    \begin{figure}[H]
    \centering
    \includegraphics[height=6cm, width= 6cm]{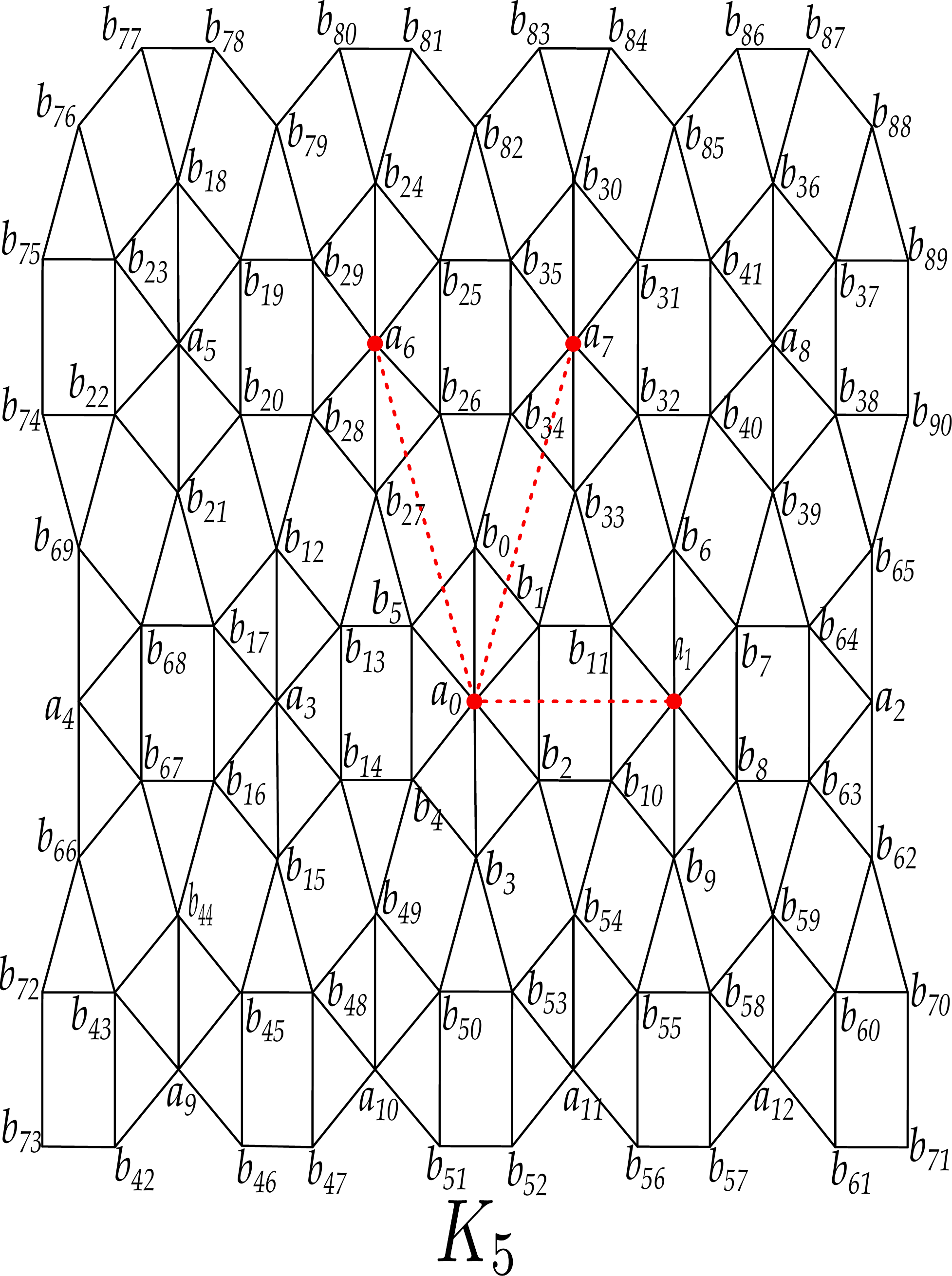}\hspace{5mm}
    \includegraphics[height=6cm, width= 6cm]{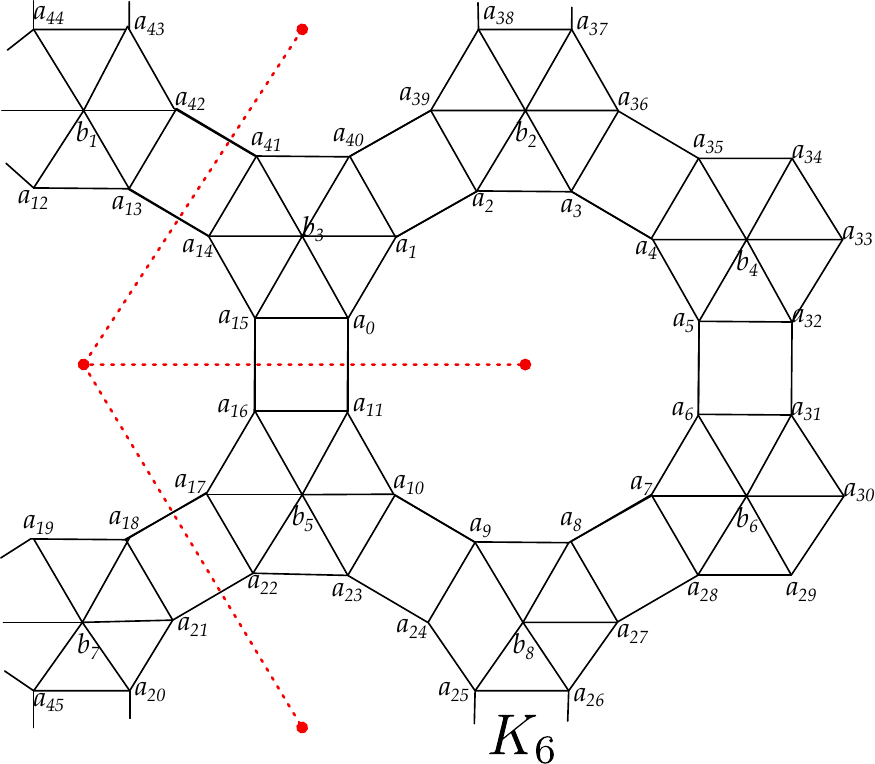}

    \end{figure}
    
    \begin{figure}[H]
    \centering
    \includegraphics[height=6cm, width= 6cm]{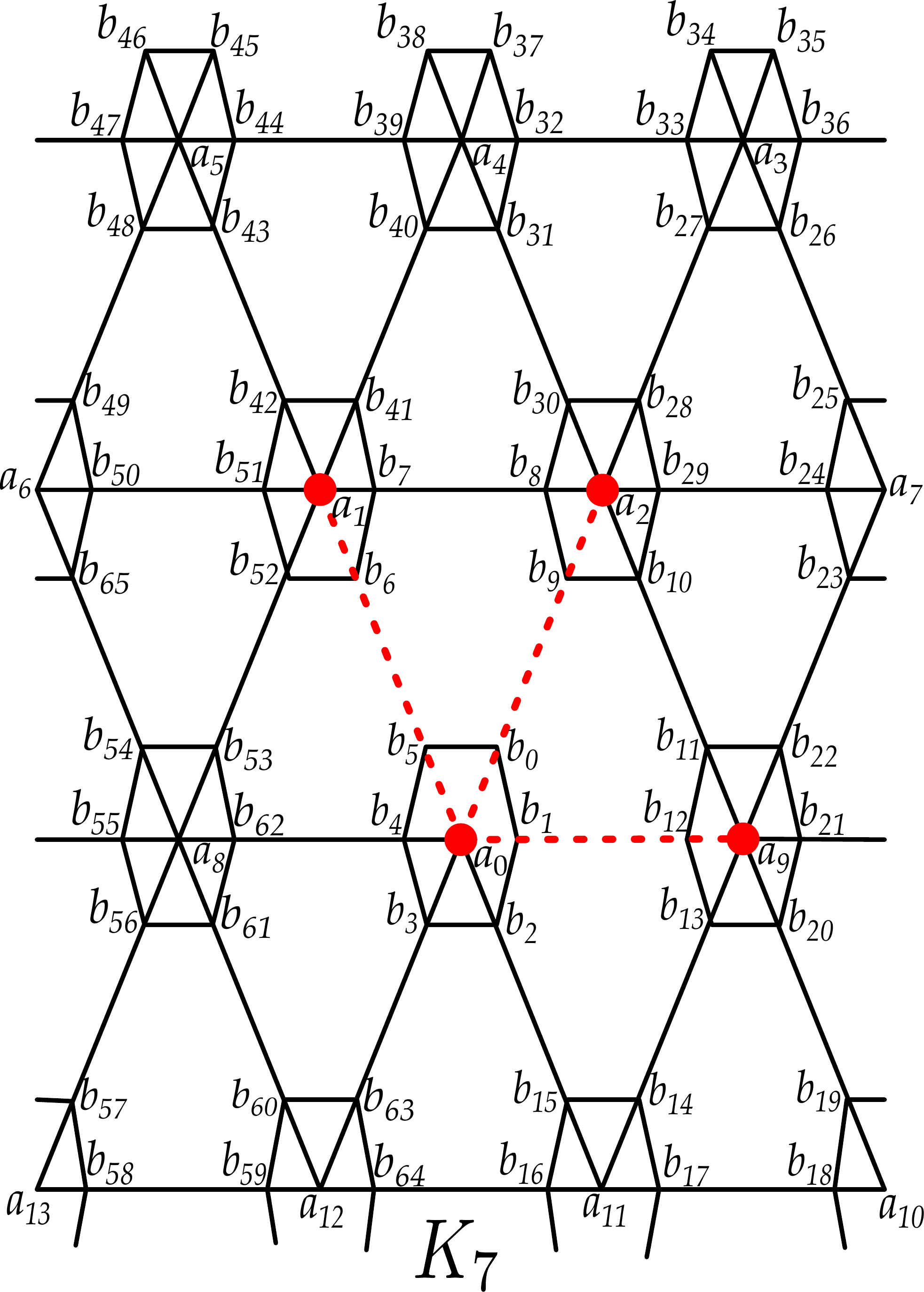}
    \includegraphics[height=6cm, width= 6cm]{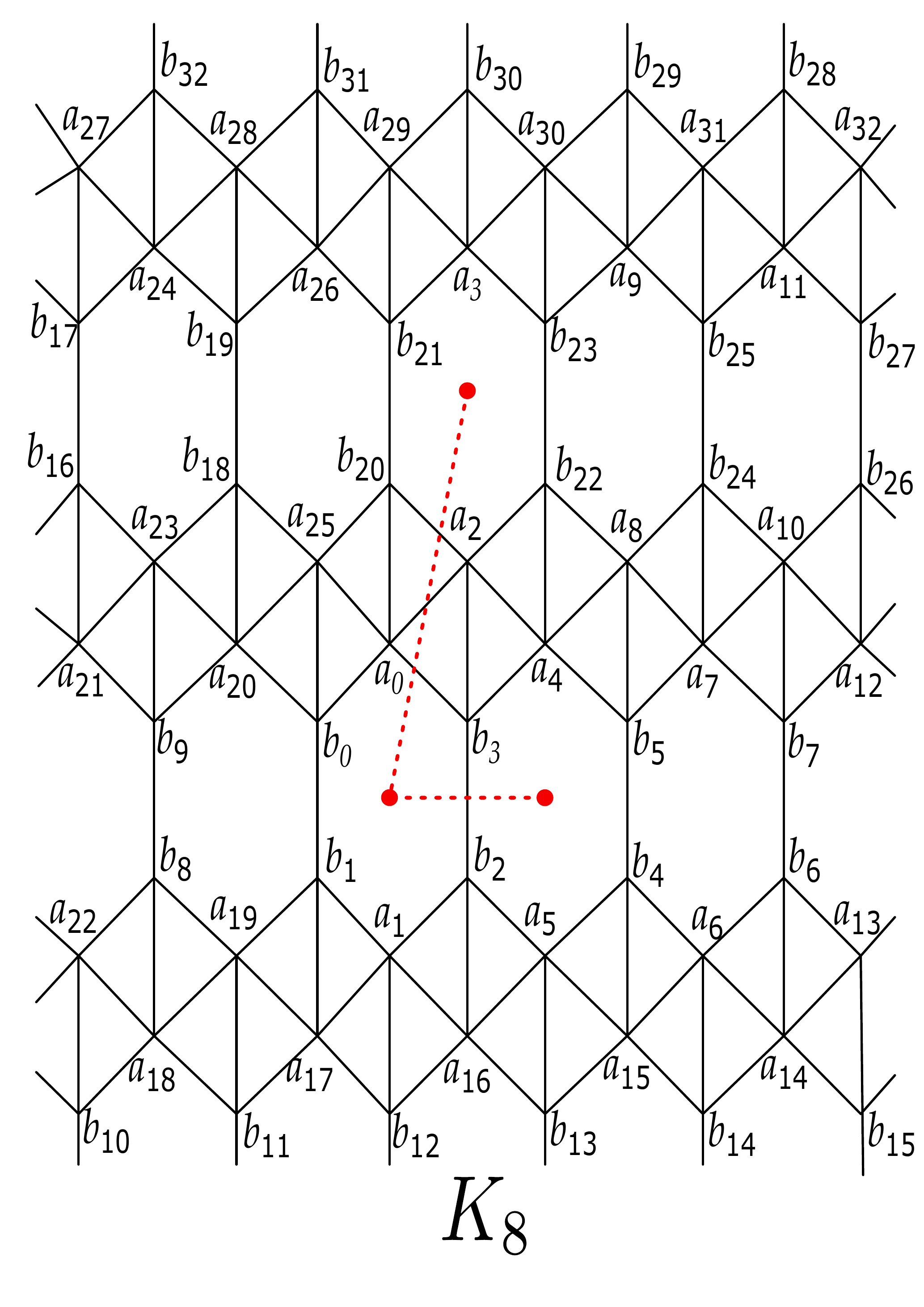}
     \vspace{10mm}
     
    \includegraphics[height=6cm, width= 6cm]{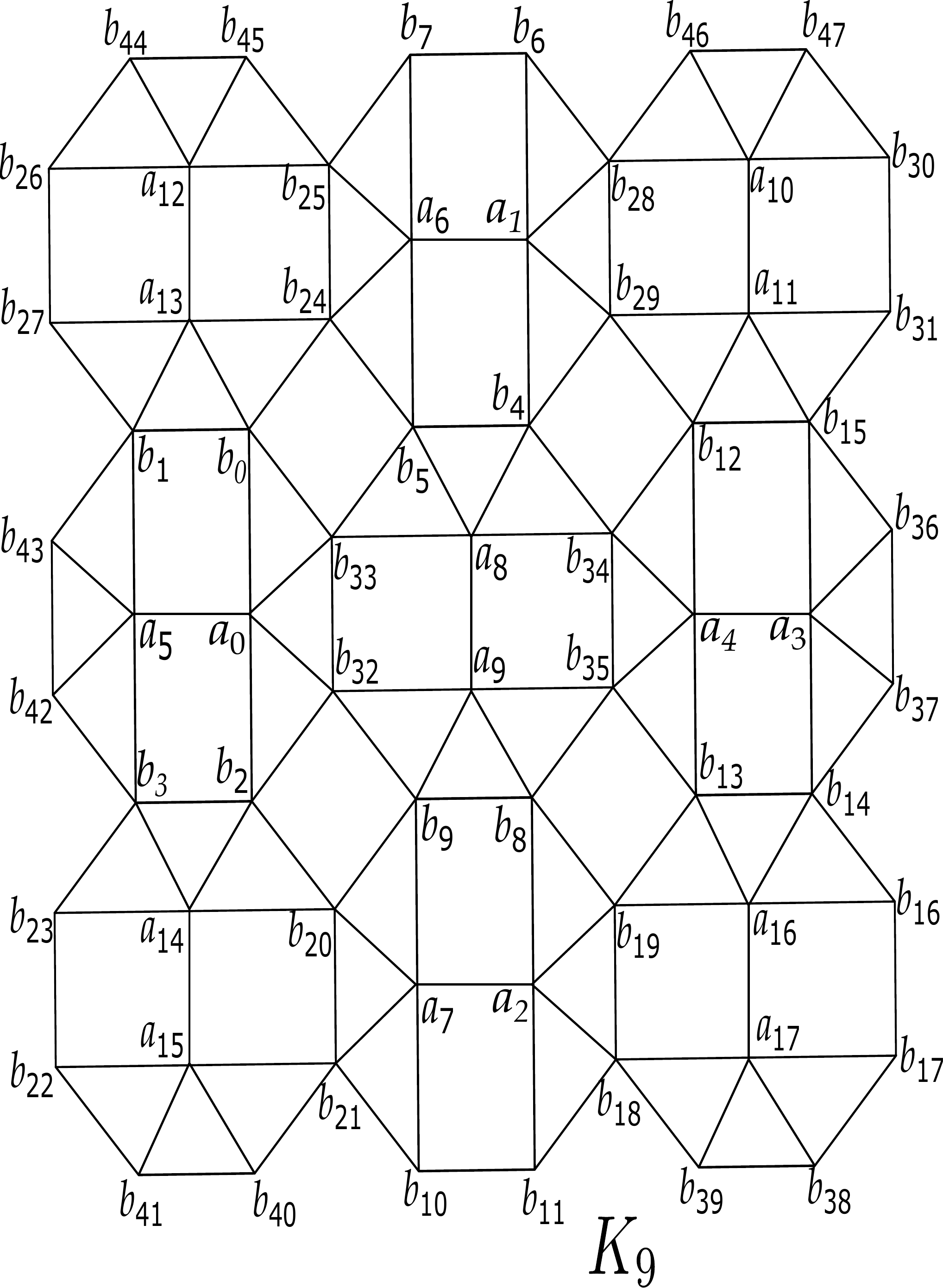}\hspace{5mm}
    \includegraphics[height=6cm, width= 6cm]{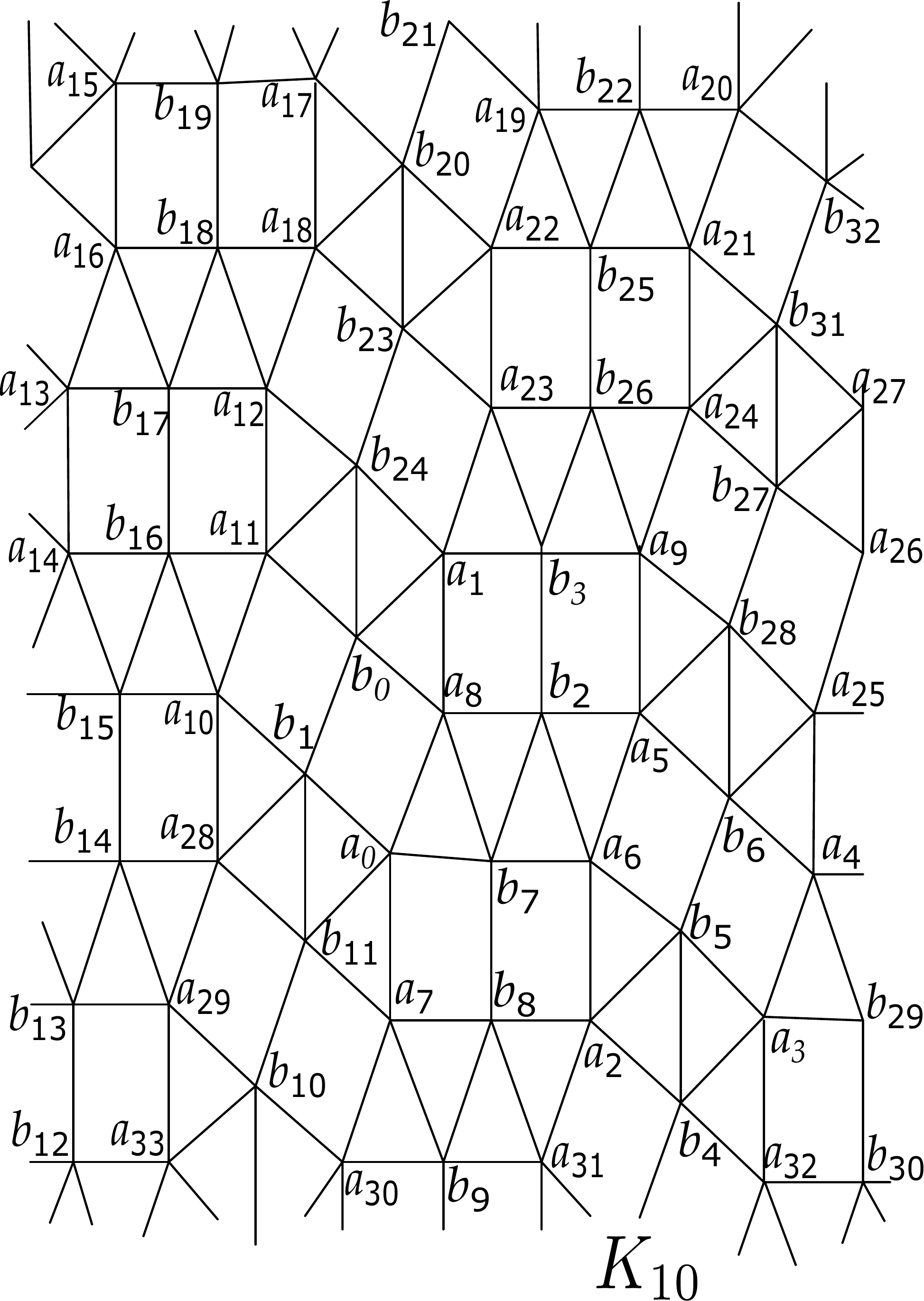}
    
     \vspace{10mm}
     \end{figure}
    
    \begin{figure}[H]
    \centering
     
    \includegraphics[height=6cm, width= 6cm]{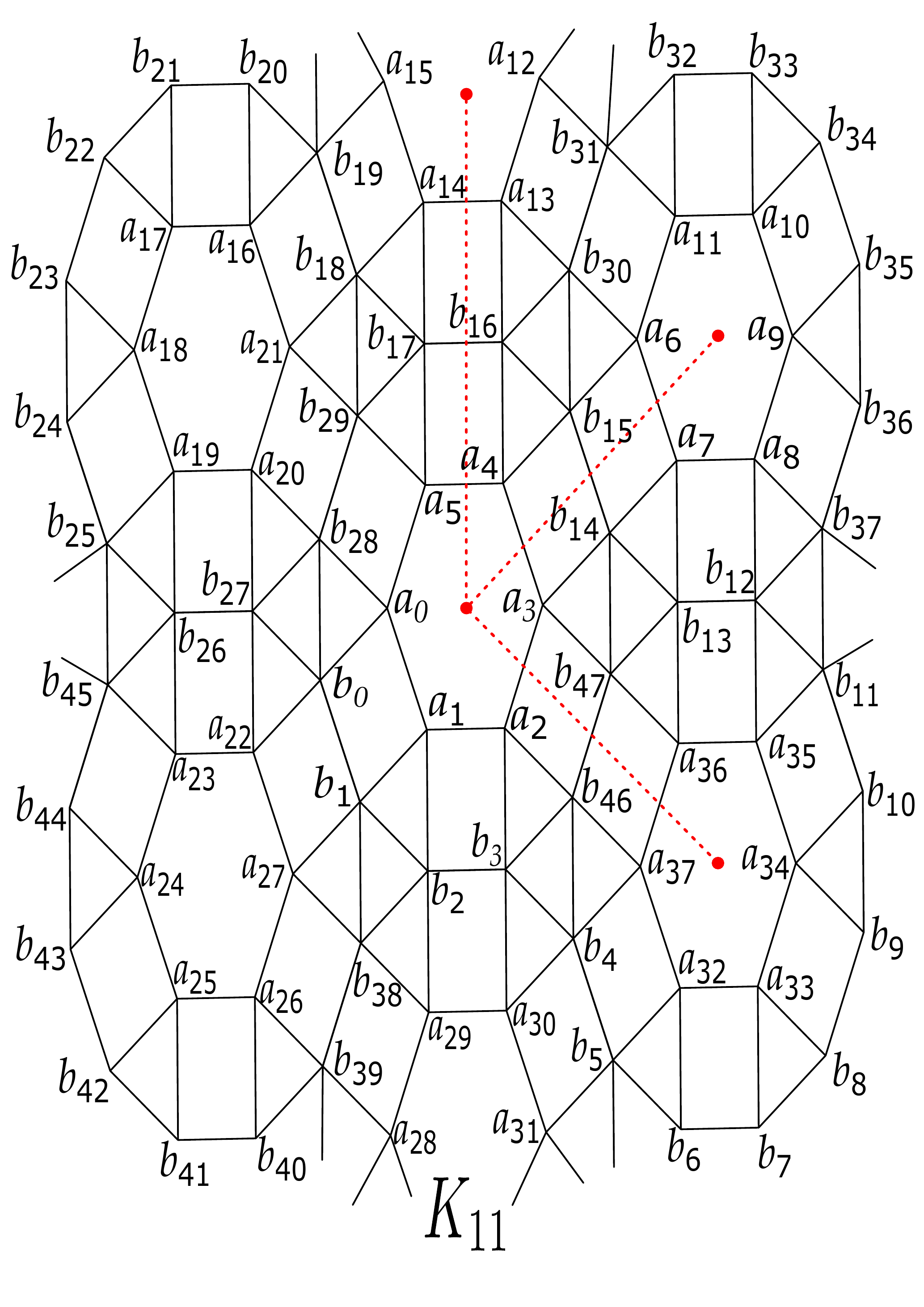}\hspace{5mm}
    \includegraphics[height=6cm, width= 6cm]{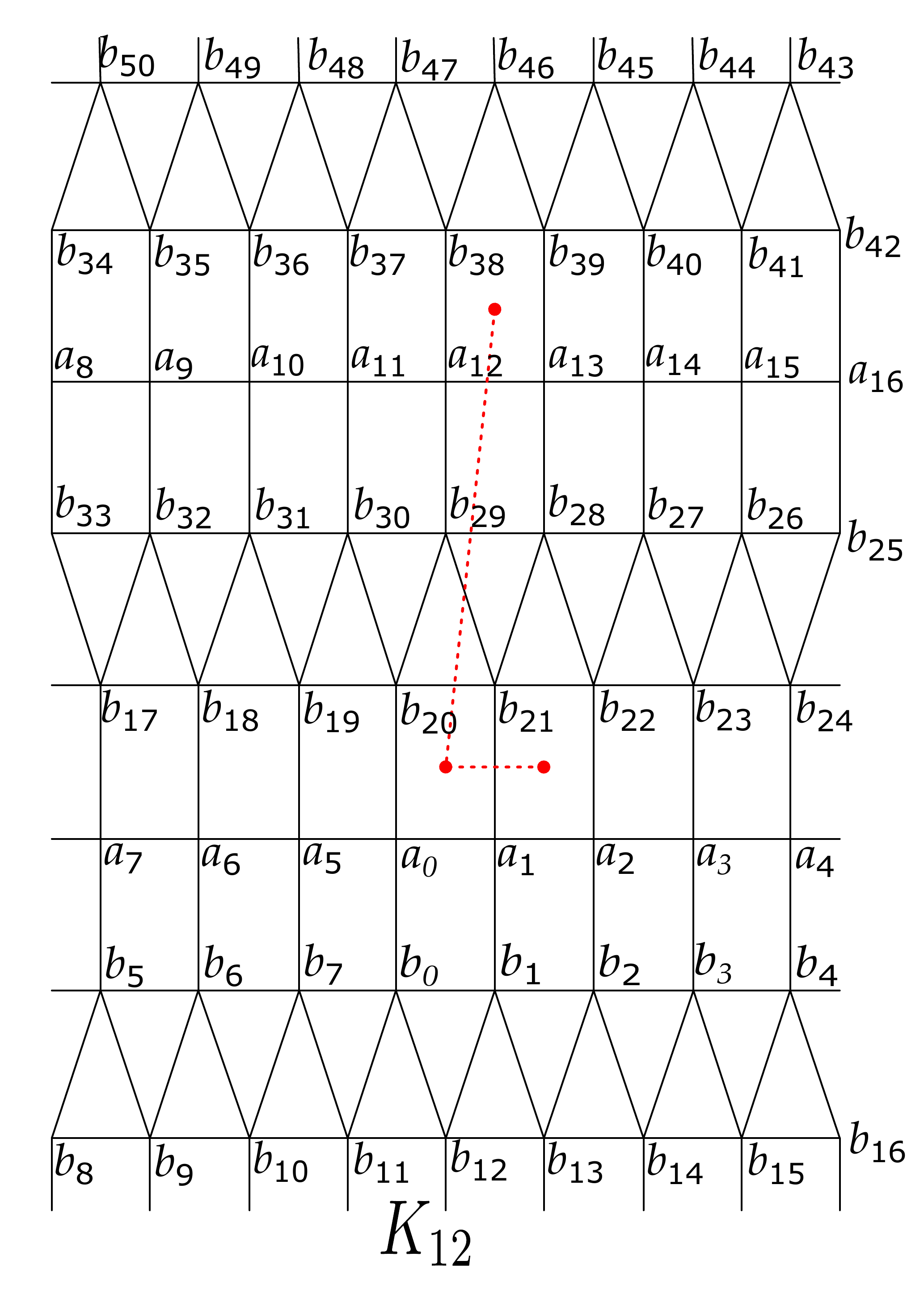}
    
     \end{figure}
     
     \begin{figure}[H]
    \centering
    \includegraphics[height=6cm, width= 6cm]{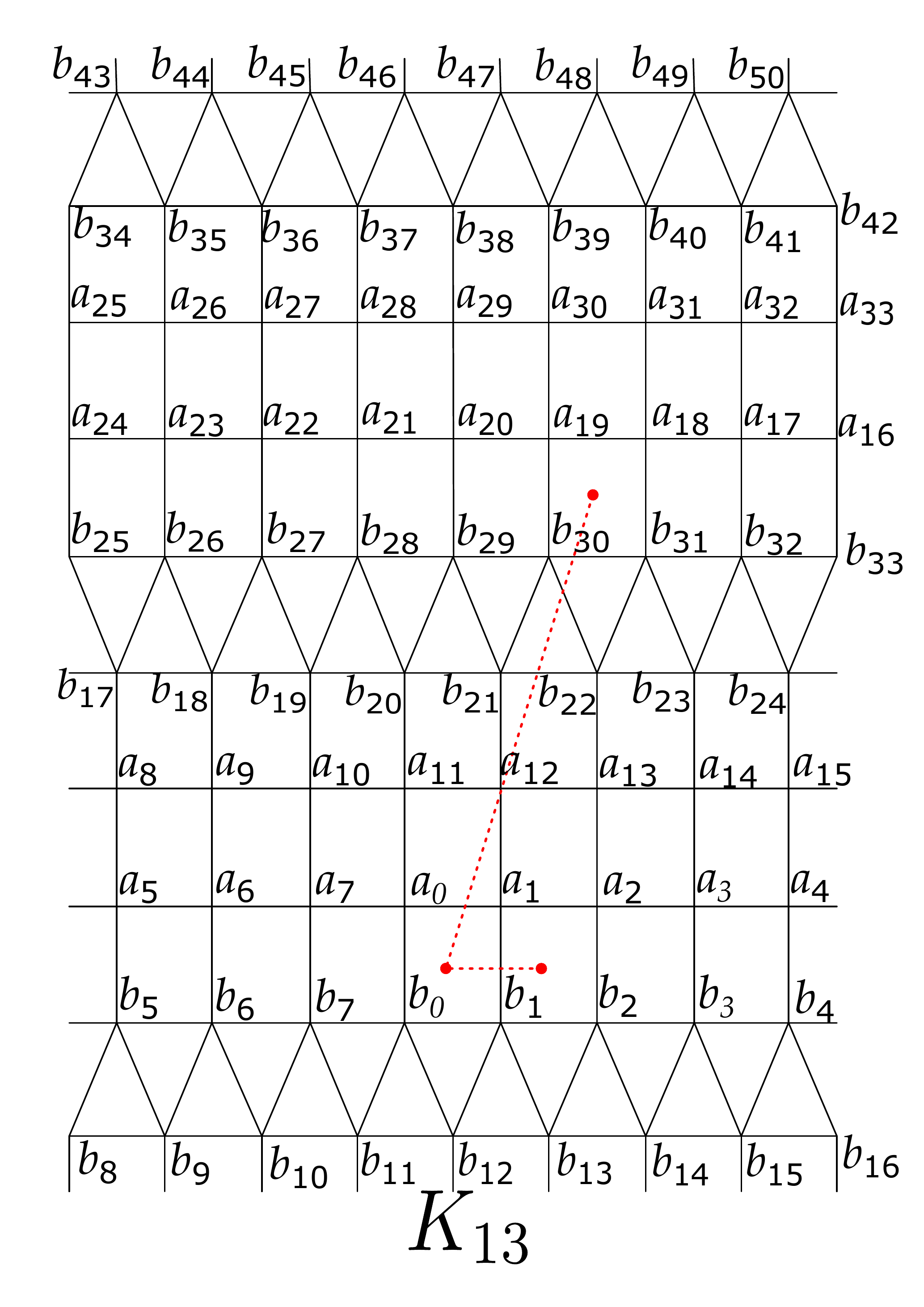}\hspace{5mm}
    \includegraphics[height=6cm, width= 6cm]{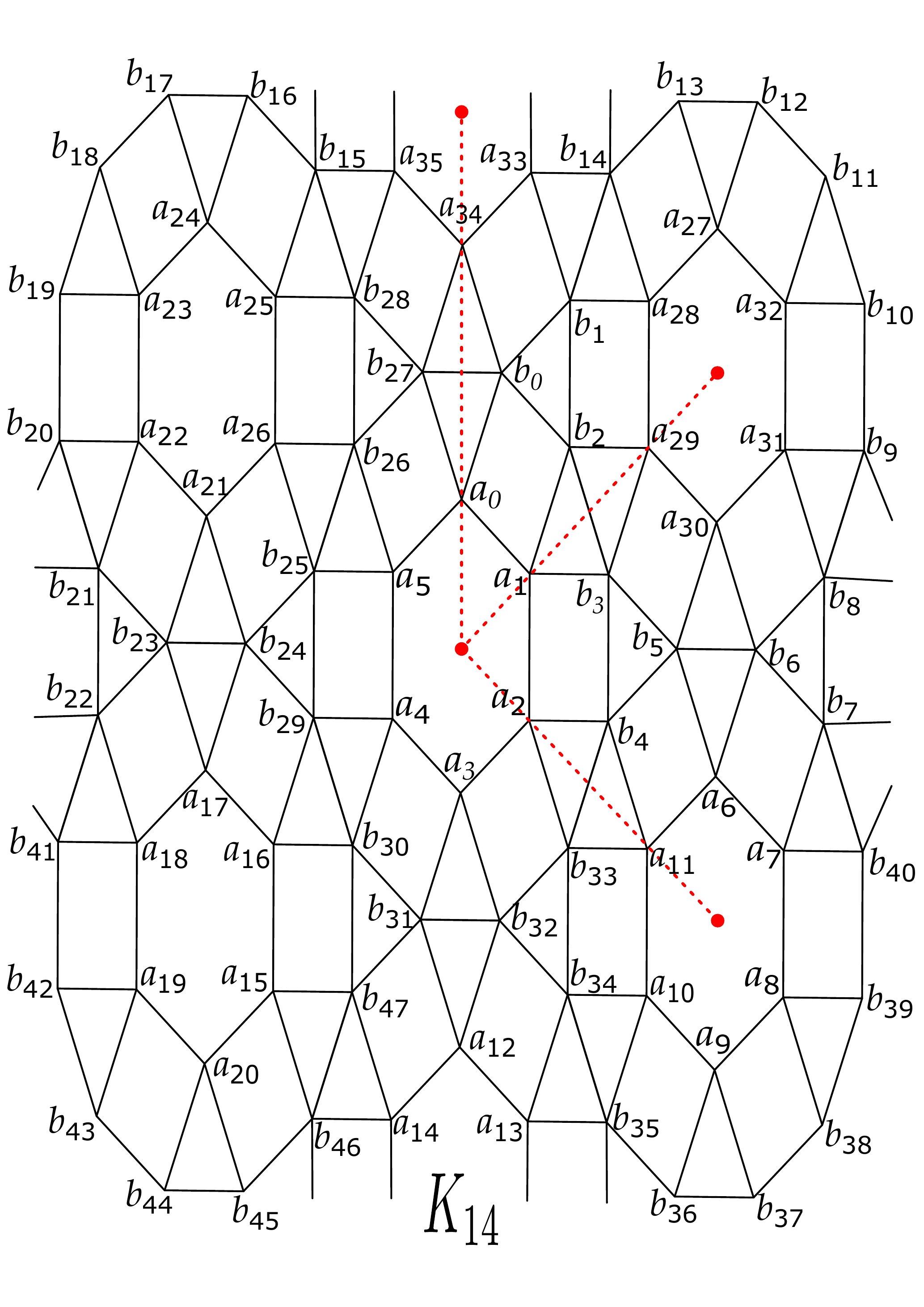}
    
     \vspace{10mm}
     
    \includegraphics[height=6cm, width= 6cm]{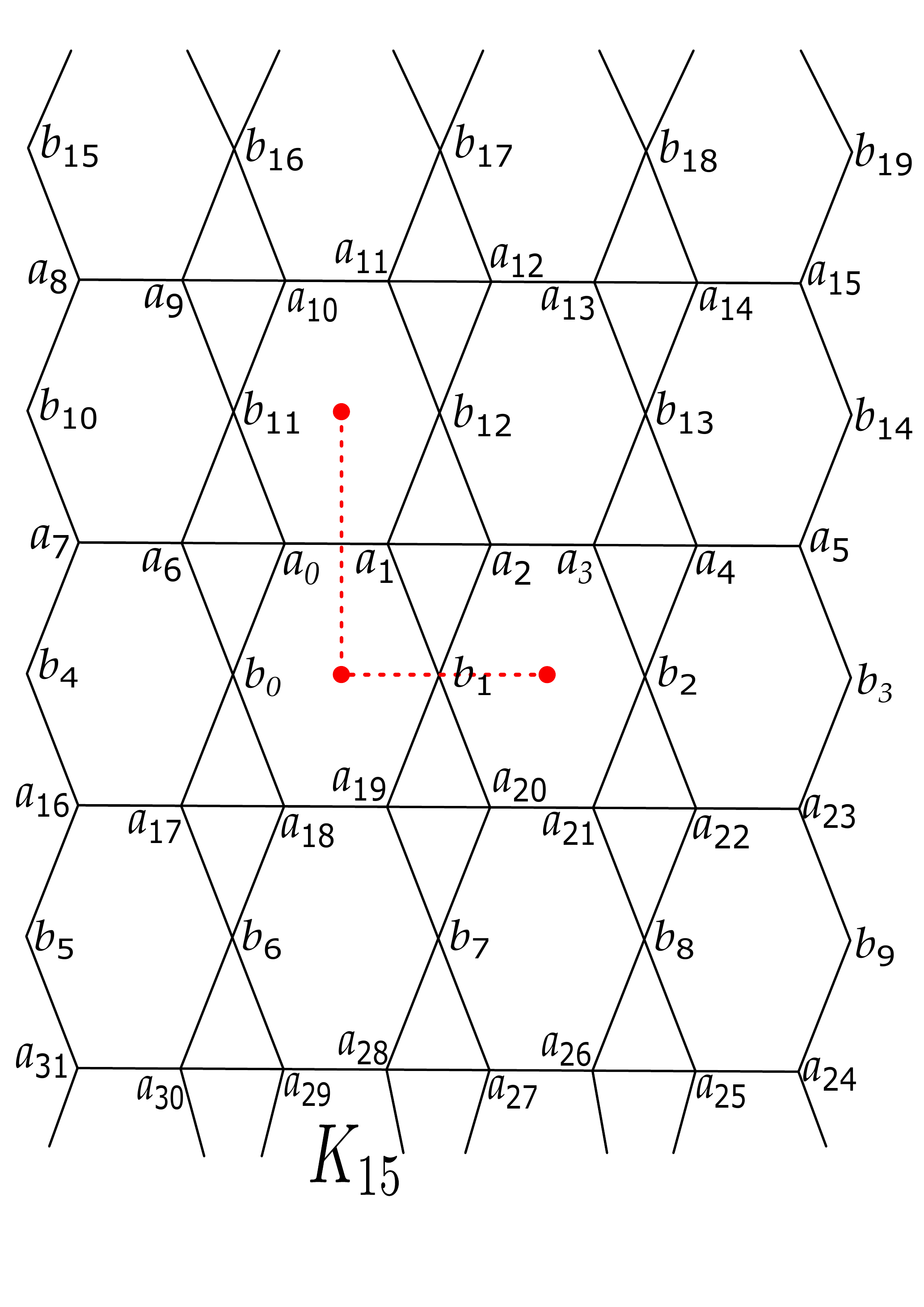}\hspace{5mm}
    \includegraphics[height=6cm, width= 6cm]{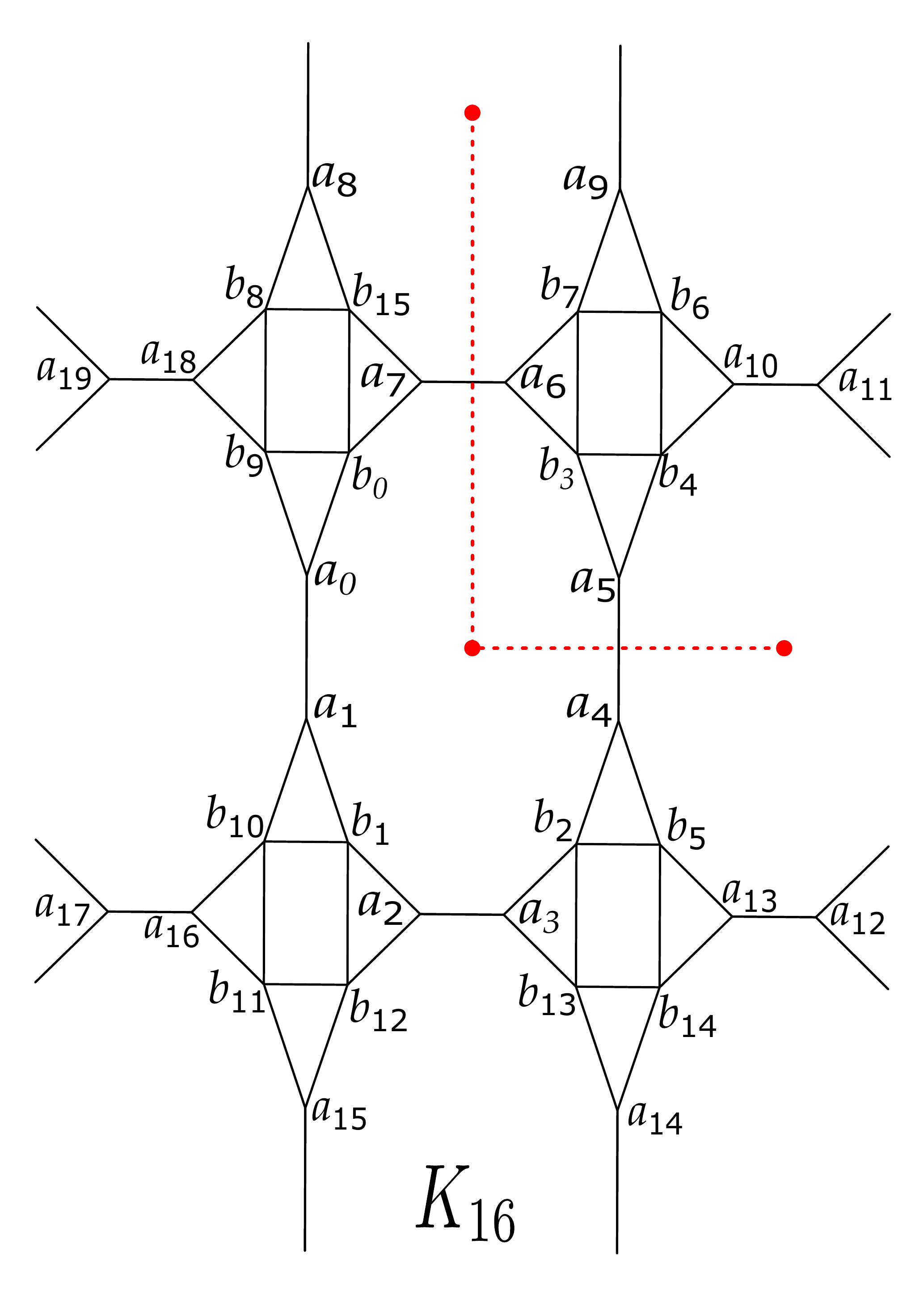}
    
     \vspace{10mm}
     \end{figure}
    
    \begin{figure}[H]
    \centering
    \includegraphics[height=6cm, width= 6cm]{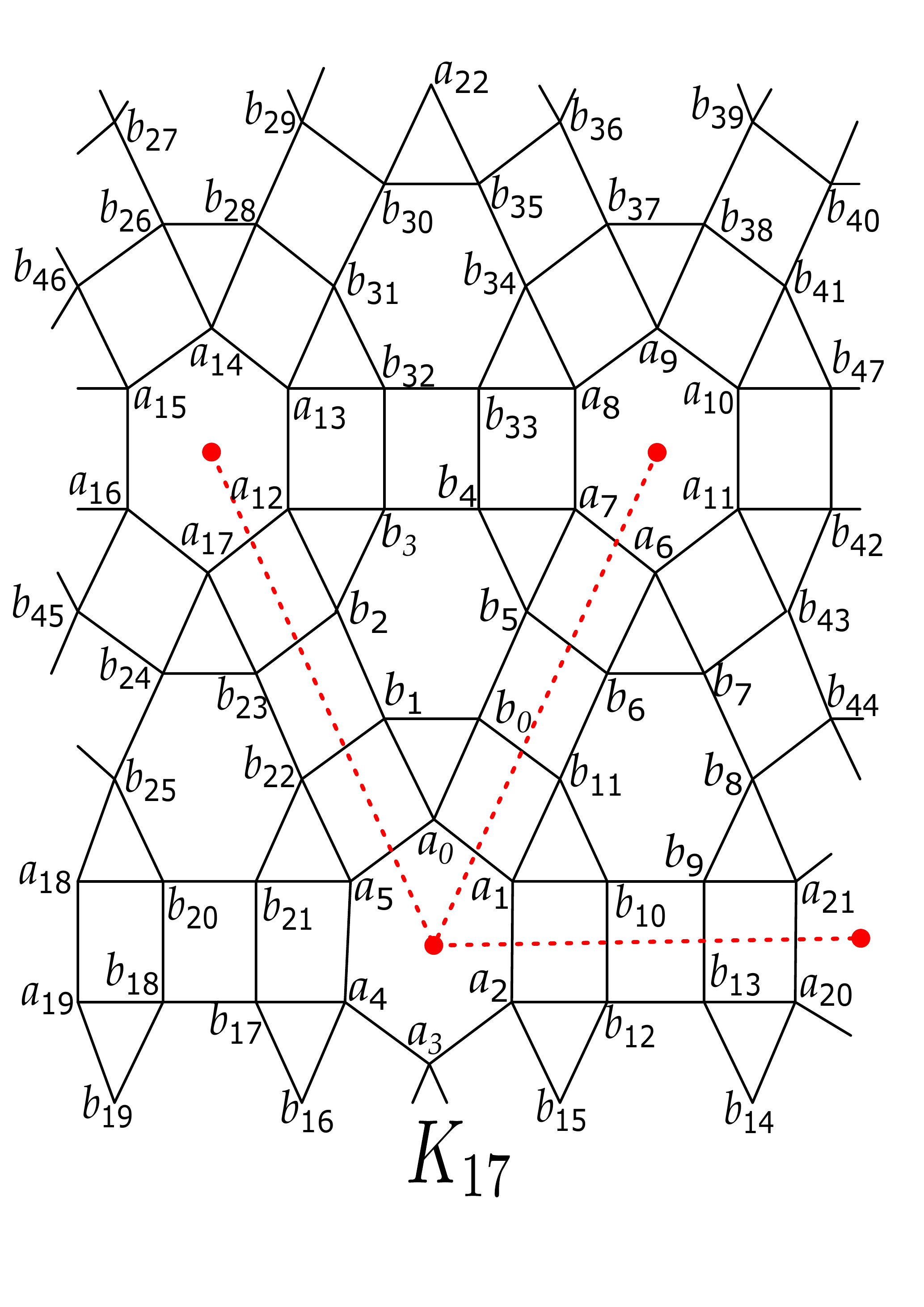}\hspace{5mm}
    \includegraphics[height=6cm, width= 6cm]{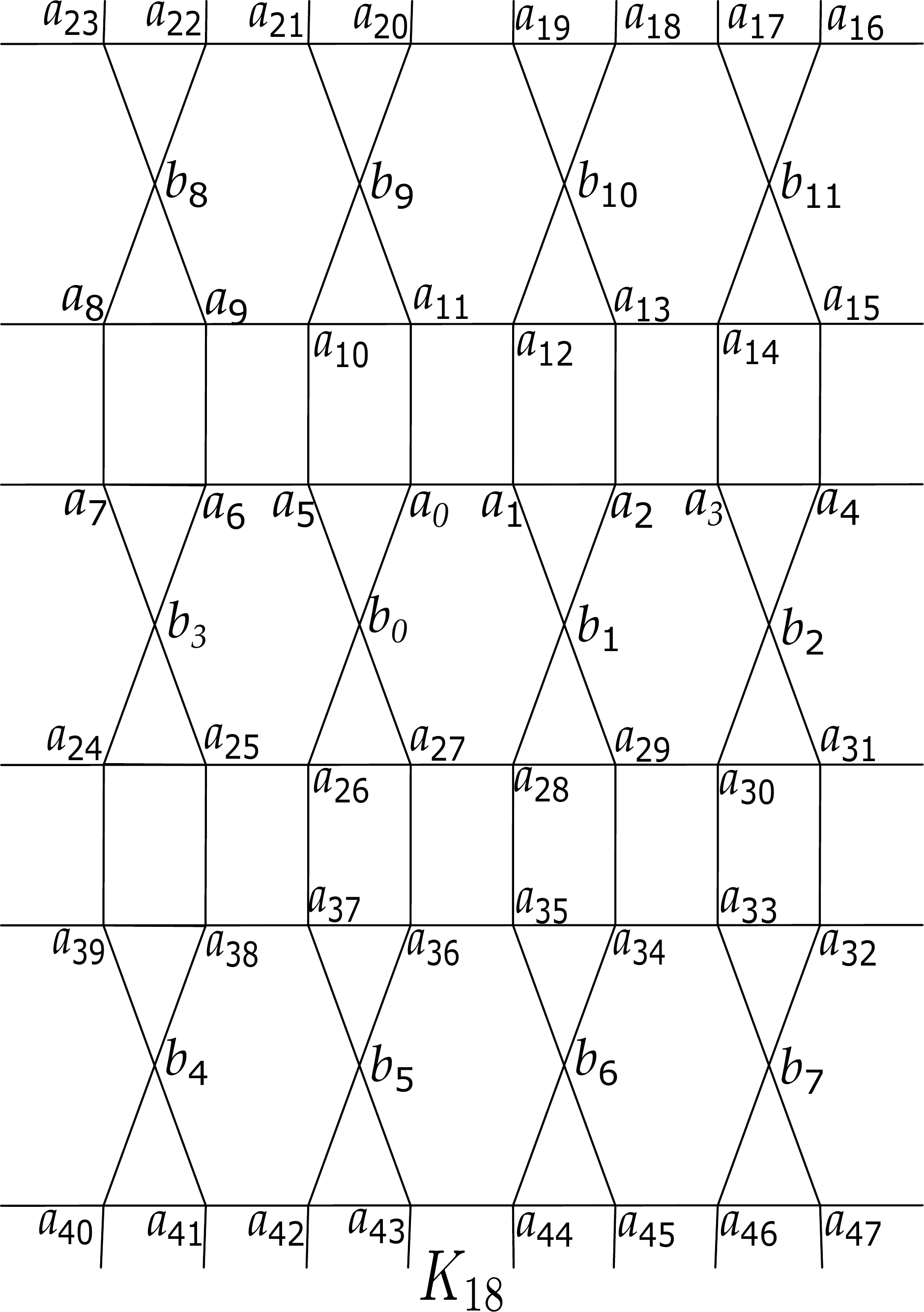}
     \vspace{10mm}
     
     \end{figure}
     
     \begin{figure}[H]
    \centering
    \includegraphics[height=6cm, width= 6cm]{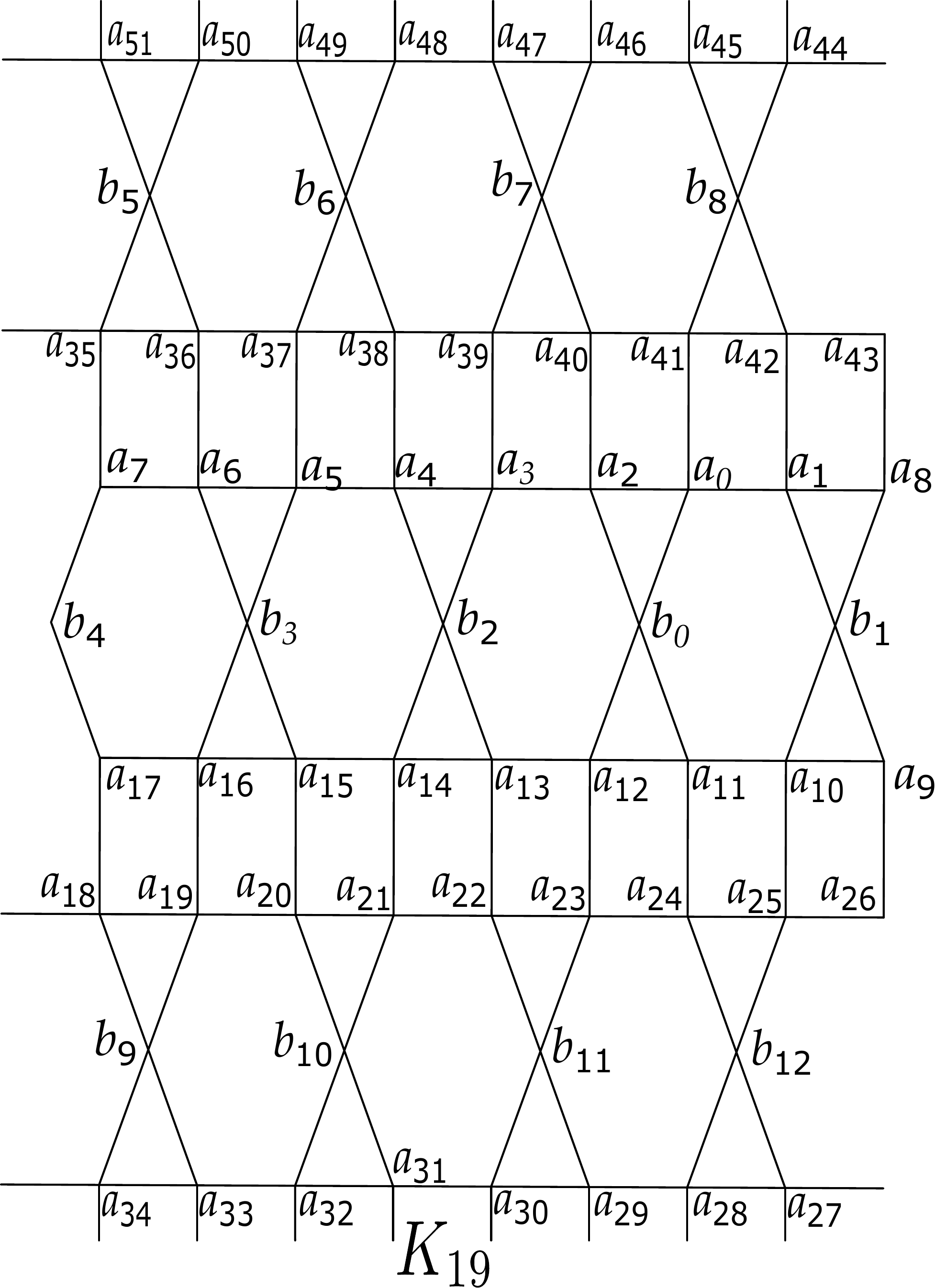}\hspace{5mm}
    \includegraphics[height=6cm, width= 6cm]{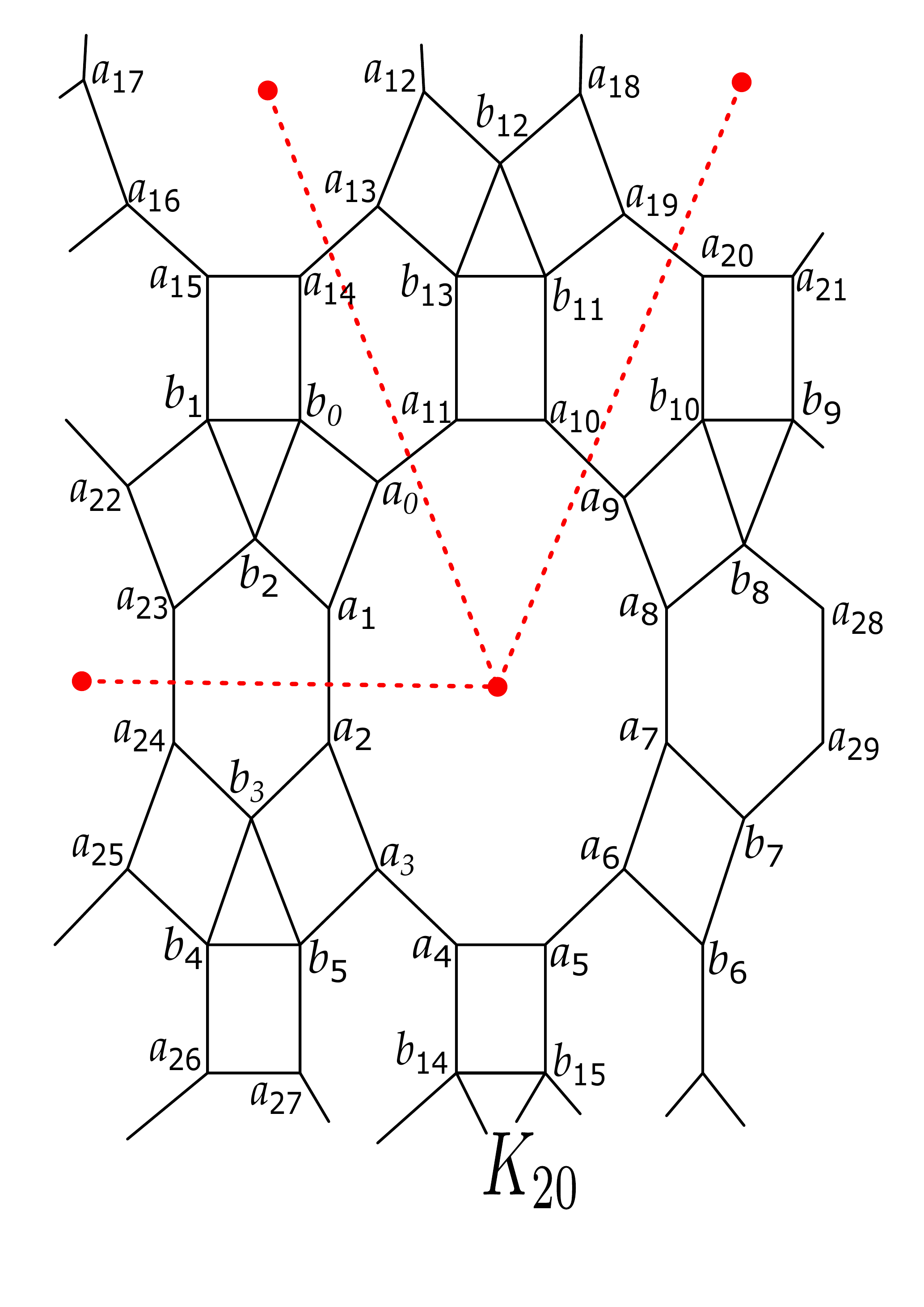}
\end{figure}

\section{Proof }\label{sec:proofs-1}
 Let $K_i$ (given in Section \ref{2uniform}) be of type $A$, where 
 \begin{align*}
   A   \in &\{ [3^{6};3^4,6^1], [3^6;3^3;4^{2}], [3^{6};3^2,4^1,3^1,4^1], [3^{6}; 3^2,4^1,12^1],\\
   &  [3^4,6^1;3^2,6^{2}], [3^3, 4^2;3^2,4^1,3^1,4^1], [3^{3}, 4^2;3^1,4^1,6^1,4^1], [3^3,4^2;4^4], \\
   &  [3^2,4^1,3^1,4^1;3^1,4^1,6^1,4^1], [3^2,6^2; 3^1, 6^1, 3^1, 6^1], [3^1, 4^1, 3^1, 12^1; 3^1, 12^2],\\
   &  [3^1,4^2,6^1; 3^1, 4^1, 6^1, 4^1], [3^1,4^2, 6^1; 3^1, 6^1, 3^1, 6^1], [3^1, 4^1, 6^1, 4^1; 4^1, 6^1, 12^1] \}.  
 \end{align*}
 
Gr\"{u}nbaum and G. C. Shephard \cite{GS1977, GS1981} and Kr\"{o}tenheerdt \cite{Otto1977} have discussed the existence and uniqueness of the $2$-uniform tilings $K_i$, $i =1, 2, \dots, 20$ of the plane.  Thus, we have the following. 

\begin{proposition}\label{prop1}
The $2$-uniform maps $K_{i}$ ($1 \le i \le 20$)  are unique up to isomorphism. 
\end{proposition}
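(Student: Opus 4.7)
The plan is to verify uniqueness up to isomorphism for each of the twenty $2$-uniform tilings by a local-to-global case analysis. For each of the fourteen type pairs in the list, I would begin by fixing, at a vertex of each of the two types, the cyclic star $[p_1^{n_1},\ldots,p_k^{n_k}]$; this pins down which regular polygon lies on each edge at that vertex, up to the choice of orientation and reflection. Since the tiling is edge-to-edge, an edge of the map is shared by exactly two polygons and two vertices, so the polygons incident to an edge at a vertex of the first type force the matching positions in the star of the vertex at the other end, which in turn determines the type of that neighbor (first or second). In this way each type pair produces a constraint graph between the two vertex classes.

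Next I would use the fact that $\mathbb R^2$ is simply connected and the two polygon species and vertex species meeting at an edge are rigidly fixed by the Archimedean angle sums. Starting from a base vertex with a chosen star, one builds the tiling by iteratively filling in the forced polygons around the boundary of the constructed patch; the above constraints ensure that at every step the extension is unique up to a global isometry of the plane. Hence the $2$-uniform tiling associated with each consistent set of local data is unique up to isomorphism. Collecting the admissible local data across the fourteen types reproduces the twenty tilings $K_1,\dots,K_{20}$ listed in Section \ref{2uniform}, matching the enumerations of Gr\"unbaum--Shephard \cite{GS1977, GS1981} and Kr\"otenheerdt \cite{Otto1977}.

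For those types that admit more than one $2$-uniform realization (there are six such types accounting for the gap between $14$ and $20$), I would distinguish the realizations by a combinatorial invariant attached to the arrangement of the two vertex classes, such as the pattern of straight ``stripes'' or ``bands'' of one class inside the other, or the period lattice of the associated translation subgroup; two tilings with the same invariant will be shown isomorphic by matching base stars, while different invariants force non-isomorphism.

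The main obstacle will be the bookkeeping in the case analysis for types that admit two realizations, especially $[3^6;3^3,4^2]$ and $[3^3,4^2;3^2,4^1,3^1,4^1]$, where the interleaving of vertex classes can be made in two genuinely different ways; there one must verify both that the star-matching constraints admit exactly two globally consistent extensions and that a putative isomorphism between them would have to carry the distinguishing invariant to itself, which it does not. Once these cases are handled, the remaining types fall out routinely since their local constraints leave only a single compatible way to tile.
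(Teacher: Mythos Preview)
Your proposal sketches a genuine proof strategy, but the paper does not prove this proposition at all. Immediately before the statement the paper writes: ``Gr\"{u}nbaum and G. C. Shephard \cite{GS1977, GS1981} and Kr\"{o}tenheerdt \cite{Otto1977} have discussed the existence and uniqueness of the $2$-uniform tilings $K_i$, $i =1, 2, \dots, 20$ of the plane. Thus, we have the following.'' That sentence \emph{is} the paper's entire argument; Proposition~\ref{prop1} is simply a restatement of the classical enumeration result, invoked by citation.

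So your approach is not wrong, but it is doing far more work than the paper does. If your goal is to match the paper, the correct ``proof'' is a one-line appeal to \cite{GS1977, GS1981, Otto1977}. If instead you intend to supply an independent proof, be aware that your outline is only a plan: the claim that the local star constraints force a unique global extension in the simply connected plane is exactly the nontrivial content of the Gr\"unbaum--Shephard/Kr\"otenheerdt classification, and carrying it out rigorously for all fourteen types (and distinguishing the six duplicated types by explicit invariants) is a substantial case analysis that your proposal gestures at but does not execute. In particular, the assertion that ``at every step the extension is unique up to a global isometry'' is precisely what needs to be checked type by type, and for the types admitting two realizations you must also show that no \emph{third} consistent extension exists.
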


\begin{proof}[Proof of Theorem \ref{theo1}(a)]

\noindent Case 1. Let $X_1$ be a $2$-semiequivelar map on the torus that is the quotient of the plane's $2$-uniform lattice $K_1$ (see in Section \ref{2uniform}).
Let $V_{1} = V(K_1)$ be the vertex set of $K_1$. Let $H_{1}$ be the group of all the translations of $K_1$. So, $H_1 \leq $Aut$(K_1)$.

By assumption, $X_1$ is a $2$-semiequivelar map on the torus and it is quotient of $K_1$, so, we can assume, there is a polyhedral covering map $\eta_{1} : K_1 \to X_1$ where $X_1 = K_1/\Gamma_{1}$  for some fixed element (vertex, edge or face) free subgroup $\Gamma_{1} \le $Aut$(K_1)$. Hence $\Gamma_{1}$
consists of translations and glide reflections. Since $X_1 =K_1/\Gamma_{1}$ is orientable, $\Gamma_{1}$ does not contain any glide reflection. Thus $\Gamma_{1} \leq H_{1}$.

We take the middle point of the line segment joining vertices $a_{0}$ and $a_3$ as the origin $(0,0)$ of $K_1$. Let $A_1 := a_{9} - a_{0}$, $B_1 := a_{14}- a_{0}$ and  $F_1 := a_{20} - a_{0}$ in $K_1$. Clearly, $B_1 = \ell_1 A_1+t_1 F_1$ for some $\ell_1, t_1 \in \mathbb{Z}$. Then $$H_1 := \langle \alpha_1 \colon x\mapsto x+A_1, \beta_1 \colon x\mapsto x+B_1\rangle.$$ Under the action of $H_1$, vertices of $K_1$ form twelve orbits. The orbits are 
\begin{align*}
& O_1 :=\langle a_{0} \rangle, O_2 :=\langle a_{1} \rangle, O_3 :=\langle a_{2} \rangle, 
O_4 :=\langle a_{3} \rangle, O_5 :=\langle a_{4} \rangle, O_6 :=\langle a_{5} \rangle,\\
& O_7 :=\langle b_{18} \rangle, O_8 :=\langle b_{20} \rangle, O_9 :=\langle b_{28} \rangle, O_{10} :=\langle b_{39} \rangle, O_{11} :=\langle b_{37} \rangle, O_{12} :=\langle b_{27} \rangle.
\end{align*}

Let $\rho_1$ be the function obtained by $60$ degrees anticlockwise rotation. Then $\rho_1 \in  Aut(K_1)$ and $\rho_1(A_1) = B_1, \rho_1(B_1) = F_1$. 
Let $G_1 = \langle \alpha_1, \beta_1,  \rho_1\rangle$. Clearly, vertices of $K_1$ form $O_1 :=\langle a_{0} \rangle,  O_2 :=\langle b_{18} \rangle$ $G_1$-orbit. So, $G_1$ acts $2$-uniformly on $K_1$.

Since $\Gamma_{1} \leq H_{1}$, $\Gamma_{1} = \langle \gamma_1 \colon x\mapsto x+C_1, \delta_1 \colon x\mapsto x+D_1\rangle$ where $C_1 = a_1A_1 + b_1B_1$ and $D_1 = c_1A_1 + d_1B_1$, for some $a_1, b_1, c_1, d_1 \in \mathbb{Z}$.

\smallskip

\noindent {\bf Claim 1.}  $L_1 := \langle \alpha_1^m, \beta_1^m  \rangle \le \Gamma_{1}$ for some $m \in \mathbb{Z}$.

\smallskip

Since $K_1/\Gamma_1$ is compact, $C_1$ and $D_1$ are linearly independent. Therefore, there exists $a, b, c, d \in \mathbb{Q}$ such that $A_1 = aC_1 + bD_1$ and $B_1 = cC_1 + dD_1$. Let $m$ be the smallest positive integer such that $ma,mb,mc,md \in \mathbb{Z}$. Then $mA_1 = (ma)C_1 + (mb)D_1, mB_1 = (mc)C_1 + (md)D_1$. Thus, $\alpha_1^m (z) = z + mA_1 = (\gamma_1^{ma} \circ \delta_1^{mb})(z), \beta_1^m(z)= z+mB_1 = (\gamma_1^{mc} \circ \delta_1^{md})(z)$ and hence $\alpha_1^m, \beta_1^m \in \Gamma_1$.
This proves Claim 1.

Since $\Gamma_1$ is abelian, we have $L_1 \unlhd \Gamma_1 \unlhd H_1 \le G_1 \le Aut(K_1)$.

\smallskip

\noindent {\bf Claim 2.} $L_1 \unlhd G_1 $.

\smallskip
For $u, v \in \mathbb{R}$ and $p \in \mathbb{Z}$, $(\rho_1 \circ \alpha_1^p \circ \rho_1^{-1})(uA_1+vB_1) = (\rho_1 \circ \alpha_1^p)(vA_1 - uB_1) = \rho_1((vA_1-uB_1)+pA_1) = \rho_1((p+v)A_1-uB_1)=(p+v)B_1+uA_1 = (uA_1+vB_1)+pB_1 = \alpha_2^p(uA_1+vB_1).$ Thus, $\rho_1 \circ \alpha_1^p \circ \rho_1^{-1} = \alpha_2^p$. Again, for $u, v \in \mathbb{R}$, $\rho_1\circ\alpha_2^p\circ\rho_1^{-1} (uA_1+vB_1) = (\rho_1 \circ \alpha_2^p)(vA_1-uB_1) = \rho_1((vA_1-uB_1)+pB_1) = \rho_1(vA_1+(p-u)B_1) = vB_1-(p-u)A_1 = (uA_1+vB_1)-pA_1 = \alpha_1^{-p}(uA_1+vB_1)$. Thus, $\rho_1 \circ \alpha_2^p \circ \rho_1^{-1} = \alpha_1^{-p}.$ In particular, $\rho_1 \circ \alpha_1^m \circ \rho_1^{-1} = \alpha_2^m$, $\rho_1 \circ \alpha_2^m\circ\rho_1^{-1}=\alpha_1^{-m} \in L_1.$ Since $\alpha_1, \alpha_2$ commute, $L_1 \unlhd G_1 $.

By Claim 2, $L_1$ is a normal subgroup of $G_1$. Therefore, $G_1/L_1$ acts on $Y_1= K_1/L_1$ and $u + L_1 \mapsto u+ \Gamma_1$  gives a covering $\gamma_1 \colon Y_1 \to X_1$.
Since  $\langle a_{0} \rangle, \langle b_{18} \rangle$
 are the $G_1$-orbits, it follows that $O_j/L_1$ for $j=1, 2$ are the $(G_1/L_1)$-orbits. Clearly, $G_1/L_1 \le Aut(Y_1)$. It follows that the number of Aut$(Y_1)$-orbits of vertices is $2$. This completes Theorem \ref{theo1} when $X_1$ is associated to $K_1$. 
 
\medskip
 
 \noindent Case 2. Let $X_2$ be a $2$-semiequivelar map on the torus and $X_2 = K_2/\Gamma_{2}$ (see $K_2$ in Section \ref{2uniform}) for some fixed element (vertex, edge or face) free subgroup $\Gamma_{2} \le $Aut$(K_2)$.
 
Let $V_{2} = V(K_2)$ be the vertex set of $K_2$. Let $H_{2}$ be the group of all the translations of $K_2$. So, $H_2 \leq $Aut$(K_2)$. Since $X_2 = K_2/\Gamma_{2}$, $\Gamma_{2}$
consists of translations and glide reflections. Since $X_2 =
K_2/\Gamma_{2}$ is orientable, $\Gamma_{2}$ does not contain any glide reflection. Thus $\Gamma_{2} \leq H_{2}$.

 We take the middle point of the line segment joining vertices $a_{0}$ and $a_3$ as the origin $(0,0)$ of $K_2$. Let $A_2 := a_{6} - a_0$, $B_2 := a_{18} - a_0$ and $F_2 := a_{24} - a_{0}$ $\in \mathbb{R}^2$. Similarly as in Case 1, define $H_2, G_2, L_2$. The result follows in this case by similar argument as in Case 1. 
 
 \medskip

 \noindent Case 3.  Let $X_5$ be a $2$-semiequivelar map on the torus and $X_5 = K_5/\Gamma_{5}$ (see $K_5$ in Section \ref{2uniform}) for some fixed element (vertex, edge or face) free subgroup $\Gamma_{5} \le $Aut$(K_5)$.

We take the middle point of the line segment joining vertices $b_{0}$ and $b_{3}$ as the origin $(0,0)$ of $K_5$ (see in Section \ref{2uniform}). Let  $A_5 := a_1 - a_0$, $B_5 := a_{7} - a_0$ and $F_5 := a_{6} - a_0$ $\in \mathbb{R}^2$. Similarly as above in Case 1, define $H_5, G_5, L_5$. The result follows in this case by exactly same argument as in in Case 1. 

\medskip
 
\noindent Case 4.  Let $X_6$ be a $2$-semiequivelar map on the torus and $X_6 = K_6/\Gamma_{6}$ (see $K_6$ in Section \ref{2uniform}) for some fixed element (vertex, edge or face) free subgroup $\Gamma_{6} \le $Aut$(K_6)$.

We take the middle point of the line segment joining vertices $a_{0}$ and $a_{6}$ as the origin $(0,0)$ of $K_6$. Let  $A_6 := a_{27} - a_1$, $B_6 := a_{32} - a_{0}$ and $F_6 := a_{37} - a_{11}$ $\in \mathbb{R}^2$. Similarly as in Case 1, define $H_6, G_6, L_6$. The result follows in this case by similar argument as in Case 1. 

\medskip
 
\noindent Case 5.  Let $X_7$ be a $2$-semiequivelar map on the torus and $X_7 = K_7/\Gamma_{7}$ (see $K_7$ in Section \ref{2uniform}) for some fixed element (vertex, edge or face) free subgroup $\Gamma_{7} \le $Aut$(K_7)$.

We take the middle point of the line segment joining vertices $b_{1}$ and $b_{4}$ as the origin $(0,0)$ of $K_7$. Let  $A_7 := a_{9} - a_0$, $B_7 := a_{2} - a_{0}$ and $F_7 := a_{1} - a_{0}$ $\in \mathbb{R}^2$. Similarly as above in Case 1, define $H_7, G_7, L_7$. The result follows in this case by similar argument as in Case 1. 

\medskip



 
\noindent Case 6. Let $X_{11}$ be a $2$-semiequivelar map on the torus and $X_{11} = K_{11}/\Gamma_{11}$ (see $K_{11}$ in Section \ref{2uniform}) for some fixed element (vertex, edge or face) free subgroup $\Gamma_{11} \le $Aut$(K_{11})$.

We take origin $(0,0)$ is the middle point of the line segment joining vertices $a_{0}$ and $a_{3}$ of $K_{11}$. Let  $A_{11} := a_{37} - a_0$, $B_{11} := a_6 - a_{0}$ and $F_{11} := a_{15} - a_{0} \in \mathbb{R}^2$. Similarly as in Case 1, define $H_{11}, G_{11}, L_{11}$. The result follows in this case by similar argument as in Case 1.

\medskip
 
\noindent Case 7. Let $X_{14}$ be a $2$-semiequivelar map on the torus and $X_{14} = K_{14}/\Gamma_{14}$ (see $K_{14}$ in Section \ref{2uniform}) for some fixed element (vertex, edge or face) free subgroup $\Gamma_{14} \le $Aut$(K_{14})$.

We take origin $(0,0)$ is the middle point of the line segment joining vertices $a_{0}$ and $a_{3}$ of $K_{14}$. Let  $A_{14} := a_{10} - a_4$, $B_{14} := a_{29} - a_{4}$ and $F_{14} := a_{35} - a_{4} \in \mathbb{R}^2$. Similarly as in Case 1, define $H_{14}, G_{14}, L_{14}$. The result follows in this case by similar argument as in Case 1.

\medskip

\noindent Case 8. Let $X_{16}$ be a $2$-semiequivelar map on the torus and $X_{16} = K_{16}/\Gamma_{16}$ (see $K_{16}$ in Section \ref{2uniform}) for some fixed element (vertex, edge or face) free subgroup $\Gamma_{16} \le $Aut$(K_{16})$.

We take origin $(0,0)$ is the middle point of the line segment joining vertices $a_{0}$ and $a_{4}$ of $K_{16}$. Let  $A_{16} := a_{5} - a_0$ and $B_{16} := a_{8} - a_1 \in \mathbb{R}^2$. Similarly as above in Case 1, define $H_{16}, G_{16}, L_{16}$. The result follows in this case by similar argument as in Case 1.

\medskip
 
\noindent Case 9. Let $X_{17}$ be a $2$-semiequivelar map on the torus and $X_{17} = K_{17}/\Gamma_{17}$ (see $K_{17}$ in Section \ref{2uniform}) for some fixed element (vertex, edge or face) free subgroup $\Gamma_{17} \le $Aut$(K_{17})$.

We take origin $(0,0)$ is the middle point of the line segment joining vertices $a_0$ and $a_3$ of $K_{17}$. Let  $A_{17} := a_{20} - a_4$, $B_{17} := a_7 -a_4$ and $F_{17} :=a_{17} - a_3 \in \mathbb{R}^2$. Similarly as in Case 1, define $H_{17}, G_{17}, L_{17}$. The result follows in this case by similar argument as in Case 1.

\medskip
 


 


\medskip
 
\noindent Case 10. Let $X_{20}$ be a $2$-semiequivelar map on the torus and $X_{20} = K_{20}/\Gamma_{20}$ (see $K_{20}$ in Section \ref{2uniform}) for some fixed element (vertex, edge or face) free subgroup $\Gamma_{20} \le $Aut$(K_{20})$.

We take origin $(0,0)$ is the middle point of the line segment joining vertices $a_{0}$ and $a_{6}$ of $K_{20}$. Let  $A_{20} := a_{19} - a_3$, $B_{20} := a_{14} -a_5$ and $F_{20} := a_{23} - a_8 \in \mathbb{R}^2$. Similarly as in Case 1, define $H_{20}, G_{20}, L_{20}$. The result follows in this case by similar argument as in Case 1.

\medskip

\noindent Case 11. Let $K = K_3, K_4, K_8, K_{12}, K_{13}$ or $K_{15}$ (see $K_i$ for $i = 3, 4, 8, 12, 13, 15$ in Section \ref{2uniform}). Let $X$ be a $2$-semiequivelar map on the torus and $X = K/\Gamma_i$  for some fixed element (vertex, edge or face) free subgroup $\Gamma_{i} \le $Aut$(K_i)$. Let the vertices of $X$ form $m_i$ Aut$(X_i)$-orbits. Then, from \cite{MDD2020}, we know $m_i =2$.  So, the identity map $X \to X$ is a covering map. 
\end{proof}

\begin{proof}[Proof of Theorem \ref{theo1}(b)]  Let $X_9$ be a $2$-semiequivelar map on the torus and $X_9 = K_9/\Gamma_{9}$ (see $K_9$ in Sec. \ref{2uniform}) for some fixed element (vertex, edge or face) free subgroup $\Gamma_{9} \le $Aut$(K_9)$.  Since $X_9 = K_9/\Gamma_{9}$ is orientable, $\Gamma_{9}$ does not contain any glide reflection. Let $H_{9}$ be the group of all the translations of $K_9$. Thus $\Gamma_{9} \leq H_{9} \leq$ Aut$(K_9)$. In $K_9$, observe that the elements in the orbit $O(b_{34})$ maps to the elements in the orbit $O(b_{33})$ only under some glide reflection symmetry or reflection symmetry about a line. Here, both are not fixed element free. Since $\Gamma_{9}$ does not contain these two symmetries, the number of vertex Aut$(X_9)$-orbit is at least three.  Hence,  there does not exist any covering $\gamma : M \to X_9$ where $M$ is a $2$-uniform toroidal map. Similarly, if $X_{i}$ is a $2$-semiequivelar map on the torus and $X_i = K_i/\Gamma_{i}$ for $i=10, 18, 19$, the number of vertex Aut$(X_i)$-orbit is at least three. Hence, in these cases also, covering map  $\gamma : M \to X_i$ where $M$ is a $2$-uniform toroidal map does not exist. 
\end{proof}

\begin{proof}[Proof of Theorem \ref{theo1}(c)] Let $X$ be a $2$-semiequivelar map on the torus that is not the quotient of the plane's $2$-uniform lattice $K_i$ for $1 \le i \le 20$ (see in Section \ref{2uniform}). We know that the plane is the universal cover of the torus. So, there exists a tiling $Y$ and a covering group $\Gamma$ such that $X = Y /\Gamma$. Since $Y \not\cong K_i$  $\forall ~i$, $Y$ is not $2$-uniform and the number of Aut$(Y)$-orbit is at least three. So, for any fixed element free subgroup $H$ of Aut$(Y)$, the number of $H$-orbit is at least three. Hence,  there does not exist any covering $\beta : M \to X$ where $M$ is a $2$-uniform toroidal map.  This completes the part (c).
\end{proof}




{\small

}

\end{document}